\newtheorem{thm}{Theorem}[section]
\newtheorem{prop}[thm]{Proposition}
\newtheorem{lem}[thm]{Lemma}
\newenvironment{dfn}{\medskip\refstepcounter{thm}
\noindent{\bf Definition \thesection.\arabic{thm}.}}{\medskip}
\newenvironment{note}[1][Note]{\begin{trivlist}
\item[\hskip \labelsep {\bfseries #1}]}{\end{trivlist}}
\def\R{\mathbb{R}}
\def\d{\mathrm{d}}
\DeclareMathOperator\id{id}
\DeclareMathOperator\GG{G}
\DeclareMathOperator\Ric{Ric}
\DeclareMathOperator\Ker{Ker}
\DeclareMathOperator\Vol{Vol}
\DeclareMathOperator\vol{vol}
\DeclareMathOperator\Div{div}
\DeclareMathOperator\tr{tr}
\DeclareMathOperator\Hol{Hol}
\numberwithin{equation}{section}
\begin{document}

\title{Geometric Flows of $\GG_2$ Structures}
\author{Jason D. Lotay\\ 
{\normalsize University College London}
}
\date{
}
\maketitle

\begin{abstract}
 Geometric flows have proved to be a powerful geometric analysis tool, perhaps most notably in the study of 3-manifold topology, the differentiable sphere theorem, Hermitian--Yang--Mills connections and canonical K\"ahler metrics. 
In the context of $\GG_2$ geometry, there are several geometric flows which arise. Each flow provides a potential means to study the geometry and topology associated with a given class of $\GG_2$ structures. 
We will introduce these flows, and describe some of the key known results and open problems in the field.
\end{abstract}

\tableofcontents

\section{Introduction}

Our understanding of $\GG_2$ structures, and particularly the question of when a $\GG_2$ structure can be deformed to become torsion-free, is very limited.  It is therefore useful to look to new tools to tackle open problems in the area.  An obvious avenue of attack is to use geometric flows, given their success in other geometric contexts: for example in analysing Hermitian connections (via Yang--Mills flow), convex hypersurfaces (via mean curvature flow) and perhaps most notably 3-manifolds and $\frac{1}{4}$-pinched Riemannian manifolds (via Ricci flow).   

The goal of these notes is to explain some of the basics behind the geometric flow approach to studying $\GG_2$ structures and give a brief overview of what is known.  It is important to note that several different flows of $\GG_2$ structures have been studied, based on various well-founded motivations. We shall attempt to give a brief description of each of these flows, the reasons behind them and some of the pros and cons in their study.  

As well as giving this brief survey of the landscape in geometric flows of $\GG_2$ structures, we will provide some indication of some key open questions that we believe are worthy of further exploration.

\begin{note}
These notes are based primarily on a lecture given at a Minischool on ``$\GG_2$ Manifolds and Related Topics'' at the Fields Institute, Toronto in August 2017.
\end{note}

\section{Geometric flows}

What is a geometric flow?  Informally, it is a mechanism for ``simplifying'' or ``decomposing'' a given geometric structure into one or several ``canonical'' or ``special'' pieces.  
Thus, the primary goals of geometric flows are to show the existence of special geometric objects and to determine which geometric objects can be deformed to special ones.  By answering these questions, one can then hope to 
understand large classes of geometric structures by understanding the much smaller class of canonical ones.

\subsection{Heat flow}

\paragraph{Functions.} The motivation for geometric flows comes from the heat flow for functions $f$ on a Riemannian manifold:
\begin{equation}\label{eq:heat}
 \left(\frac{\partial}{\partial t}+\Delta\right)f=0.
\end{equation}
(Here, and throughout, we will use the geometer's convention that the Laplacian $\Delta$ is a non-negative operator, so $\Delta=\d^*\d$ on functions.) The heat flow is parabolic, which means that if we consider \eqref{eq:heat} on a compact manifold $M$, then 
a short time solution to \eqref{eq:heat} is guaranteed to exist and the equation is ``regularizing'' (a notion we shall clarify in a moment).

We now make an elementary but fundamental observation. 

\begin{prop}
The heat flow is the negative gradient flow for the Dirichlet energy:
\begin{equation}\label{eq:Dirichlet}
 \frac{1}{2}\int_M|\d f|^2\vol_M\geq 0.
\end{equation}
\end{prop}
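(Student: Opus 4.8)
The plan is to read the statement in the standard variational sense. I would equip the space of (smooth) functions on the compact manifold $M$ with the $L^2$ inner product $\langle u,v\rangle_{L^2}=\int_M uv\,\vol_M$, regard the Dirichlet energy $E(f)=\frac12\int_M|\d f|^2\,\vol_M$ as a functional on this space, and show that its $L^2$-gradient at $f$ is exactly $\Delta f$. Granting this, the negative gradient flow $\partial_t f=-\grad E(f)$ becomes $\partial_t f=-\Delta f$, which is precisely the heat equation \eqref{eq:heat}, so the whole proposition reduces to a single gradient computation.

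First I would compute the first variation of $E$. Taking a smooth one-parameter family $f_s$ with $f_0=f$ and infinitesimal variation $h=\partial_s f_s|_{s=0}$, differentiating under the integral sign and using that $\d$ commutes with $\partial_s$ gives
\[
\frac{\d}{\d s}\Big|_{s=0}E(f_s)=\int_M\langle \d f,\d h\rangle\,\vol_M.
\]
Next I would integrate by parts. Since $M$ is compact and without boundary there are no boundary contributions, and the defining property of the codifferential $\d^*$ together with the convention $\Delta=\d^*\d$ on functions yields
\[
\int_M\langle \d f,\d h\rangle\,\vol_M=\int_M\langle \d^*\d f,h\rangle\,\vol_M=\int_M(\Delta f)\,h\,\vol_M=\langle\Delta f,h\rangle_{L^2}.
\]
Because this holds for every variation $h$ and is realized as an $L^2$ pairing, by definition the $L^2$-gradient of $E$ at $f$ is $\Delta f$.

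The conclusion is then immediate: the negative gradient flow equation reads $\partial_t f=-\Delta f$, equivalently $\left(\frac{\partial}{\partial t}+\Delta\right)f=0$, which is exactly \eqref{eq:heat}. I do not expect a genuine obstacle here, as the computation is routine; the only points requiring care are conceptual rather than technical, namely the choice of the $L^2$ inner product that defines the notion of gradient on this infinite-dimensional configuration space, and the use of compactness (closedness) of $M$ to discard boundary terms in the integration by parts. Both are already built into the setting of the excerpt, so the argument goes through directly.
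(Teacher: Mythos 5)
Your proposal is correct and follows essentially the same route as the paper: both compute the first variation of the Dirichlet energy along a one-parameter family, commute $\d$ with the parameter derivative, and use the adjointness of $\d^*$ (integration by parts on compact $M$) to identify the $L^2$-gradient as $\Delta f$. The paper's version is simply a more compressed form of your computation, phrased with $t$-derivatives instead of a separate variation parameter $s$.
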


\begin{proof}
We see that for any $t$-dependent family of functions we have
\begin{equation}
 \frac{\partial}{\partial t}\frac{1}{2}\int_M|\d f|^2\vol_M=\langle \frac{\partial}{\partial t}\d f,\d f\rangle_{L^2}=\langle\d\frac{\partial}{\partial t}f,\d f\rangle_{L^2}=\langle\frac{\partial f}{\partial t},\Delta f\rangle_{L^2}.
\end{equation}
\end{proof}

\noindent Thus \eqref{eq:Dirichlet} will decrease fastest along the heat flow \eqref{eq:heat}, and the critical points for the Dirichlet energy (which are exactly the stationary points for \eqref{eq:heat}) are given by the constant functions (i.e.~$\d f=0$) which are the absolute minimizers for the energy.  

From the gradient flow point of view we should expect that given any smooth function, by following the heat flow we should be able to deform it into a critical point for \eqref{eq:Dirichlet}, i.e.~a constant function.  
We now show that this is the case.

\begin{thm}\label{thm:heat}
Suppose that $f=f(x,t)$ solves the heat equation \eqref{eq:heat} on a compact manifold $M$ and $f(x,0)$ is smooth.  Then $f(x,t)$ exists for all $t>0$ and $f(x,t)\to c\in\R$ smoothly as $t\to\infty$, where   
$$c=\frac{1}{\Vol(M)}\int_M f(x,0)\vol_M.$$
\end{thm}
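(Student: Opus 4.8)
The plan is to prove existence for all time and convergence to the mean, using the parabolic structure together with spectral decomposition. Since the proposition above tells us the flow is the negative gradient flow of Dirichlet energy, I expect the energy to serve as the key monotone quantity controlling the long-time behaviour.

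First I would record the two basic conserved/monotone quantities. Integrating the heat equation over $M$ and using that $\int_M \Delta f \, \vol_M = 0$ (since $\Delta = \d^*\d$ on functions and $M$ is closed), the total integral $\int_M f(x,t)\,\vol_M$ is constant in $t$; hence if a limit $c$ exists it must equal the stated average $\frac{1}{\Vol(M)}\int_M f(x,0)\,\vol_M$. Second, the Proposition gives $\frac{d}{dt}\frac{1}{2}\int_M |\d f|^2 \, \vol_M = \langle \partial_t f, \Delta f\rangle_{L^2} = -\langle \Delta f, \Delta f\rangle_{L^2} = -\|\Delta f\|_{L^2}^2 \leq 0$, so the Dirichlet energy is nonincreasing along the flow.

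Next I would address long-time existence and convergence via the spectral theorem. Since $\Delta$ on a compact manifold has discrete spectrum $0 = \lambda_0 < \lambda_1 \leq \lambda_2 \leq \cdots \to \infty$ with an orthonormal basis of smooth eigenfunctions $\{\phi_k\}$, I would expand the initial data as $f(x,0) = \sum_k a_k \phi_k$ in $L^2$. The unique solution to the heat equation is then $f(x,t) = \sum_k a_k e^{-\lambda_k t}\phi_k$, where $\phi_0$ is constant, so $a_0 \phi_0 = c$ is precisely the mean value. Because $\lambda_k > 0$ for $k \geq 1$, every higher mode decays exponentially, giving $f(x,t) \to c$ in $L^2$ as $t \to \infty$, with an explicit rate governed by the spectral gap $\lambda_1$. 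This simultaneously gives existence for all $t > 0$, since the series converges.

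The main obstacle, and the part requiring genuine care, is upgrading $L^2$ convergence to \emph{smooth} convergence, as the theorem asserts. The plan is to exploit the regularizing (parabolic smoothing) property mentioned in the text: the exponential factors $e^{-\lambda_k t}$ decay fast enough to dominate the polynomial growth of the eigenvalues $\lambda_k$ appearing when one differentiates the series, so that the series for $\Delta^m f$ (equivalently, any Sobolev norm of $f(\cdot,t) - c$) also converges and decays exponentially for each fixed $m$. Concretely, one controls $\|f(\cdot,t) - c\|_{H^s}^2 = \sum_{k \geq 1} (1+\lambda_k)^s |a_k|^2 e^{-2\lambda_k t}$ and shows it tends to zero for every $s$; by Sobolev embedding this yields $C^\infty$ convergence to the constant $c$. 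The estimate $\sup_k (1+\lambda_k)^s e^{-\lambda_k t} \leq C_{s,t}$, finite for each fixed $t > 0$, is what makes the smoothing rigorous, and handling these norms uniformly is where the real work lies.
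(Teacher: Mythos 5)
Your proof is correct and takes essentially the same approach as the paper: expand in an orthonormal eigenbasis of $\Delta$, observe that the coefficients decay as $e^{-\lambda_k t}$, and identify the limit with the zero mode, i.e.\ the average of $f(\cdot,0)$. The only difference is that you explicitly carry out the upgrade from $L^2$ to smooth convergence via weighted $H^s$ sums and Sobolev embedding, a step the paper's proof leaves implicit.
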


\begin{proof}  If we suppose that $f$ is smooth at $t=0$, then it is smooth for all $t>0$ as well and 
the eigenfunctions of $\Delta$ span $L^2(M)$, so at each time $t$ we can write 
\begin{equation}\label{eq:f.decomp}
 f(x,t)=\sum_{\lambda}c_\lambda(t) f_\lambda(x)
\end{equation}
for functions $c_\lambda$ of time $t$ and $f_{\lambda}$ on $M$, where $\Delta f_\lambda=\lambda f_\lambda$ for $\lambda\geq 0$ and the $f_\lambda$ form a complete orthonormal system for $L^2(M)$. 
  It quickly follows from inserting \eqref{eq:f.decomp} 
in \eqref{eq:heat} that
\begin{equation}\label{eq:decay}
c_{\lambda}(t)=c_\lambda(0)e^{-\lambda t},
\end{equation}
and so the solution of \eqref{eq:heat} actually exists for all time $t> 0$.  Moreover,  the solution converges as $t\to\infty$ to
\begin{equation}
c_0f_0=\frac{1}{\Vol(M)}\int_M f(x,0) \vol_M, 
\end{equation}
the ``average value'' of $f$ at time $0$. 
\end{proof}

 Thus, the heat flow ``regularizes'' the function $f$ in that it simplifies it as much as possible (it turns it into a constant) and we see that the higher the frequency (i.e.~eigenvalue) 
of the eigenfunction of $\Delta$ in the expansion \eqref{eq:f.decomp}, the faster that component of $f$ decays under the flow by \eqref{eq:decay}.  The $f_\lambda$ for high $\lambda$ correspond  to higher ``oscillations'' of $f$, 
and so these ``wiggles'' in $f$ get smoothed out by \eqref{eq:heat}, eventually giving a constant.   In terms of the Dirichlet energy functional \eqref{eq:Dirichlet}, 
it shows that every function can be deformed to a minimizer (so the space of smooth functions retracts onto the constant functions, which are the critical points of the functional), 
and the minimizer we find is determined by the average value of $f$.

  Notice that our analysis in the proof of Theorem \ref{thm:heat} implies the following.
    \begin{lem}
  The integral of $f$ is constant along \eqref{eq:heat}.
  \end{lem}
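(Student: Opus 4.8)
The plan is to differentiate the integral directly and exploit that $\Delta=\d^*\d$ on the compact manifold $M$. First I would note that, since $f$ is smooth for all $t>0$ by parabolic regularity, differentiation under the integral sign is legitimate, giving
\[
\frac{\d}{\d t}\int_M f\,\vol_M=\int_M\frac{\partial f}{\partial t}\,\vol_M=-\int_M\Delta f\,\vol_M
\]
by the heat equation \eqref{eq:heat}. It then suffices to show that $\int_M\Delta f\,\vol_M=0$. Writing this as the $L^2$ pairing $\langle\Delta f,1\rangle_{L^2}$ and using self-adjointness of $\Delta$, it equals $\langle f,\Delta 1\rangle_{L^2}$; since $\d 1=0$ we have $\Delta 1=\d^*\d 1=0$, so the expression vanishes. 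Equivalently, one integrates by parts once, using $\langle\Delta f,1\rangle_{L^2}=\langle\d f,\d 1\rangle_{L^2}=0$.

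Alternatively --- and this is the reading suggested by the explicit reference to the proof of Theorem \ref{thm:heat} --- the claim drops out of the spectral decomposition already established. In \eqref{eq:f.decomp} every eigenfunction $f_\lambda$ with $\lambda>0$ is $L^2$-orthogonal to the constant eigenfunction $f_0$; since $f_0$ is a nonzero constant, this orthogonality forces $\int_M f_\lambda\,\vol_M=0$ for $\lambda\neq 0$. Hence only the zero mode survives integration, and $\int_M f(x,t)\,\vol_M=c_0(t)\int_M f_0\,\vol_M$. By \eqref{eq:decay} we have $c_0(t)=c_0(0)e^{-0\cdot t}=c_0(0)$, so the integral is constant in $t$.

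I do not expect a genuine obstacle: the lemma is a conservation law that follows either from self-adjointness of $\Delta$ (the constants being its kernel) or from the fact that the $\lambda=0$ mode is stationary under the flow. The only points worth a line of justification are the interchange of differentiation and integration in the first approach (immediate from smoothness on the compact $M$ for $t>0$) and, in the spectral approach, the interchange of summation and integration, which is controlled by the exponential decay of the higher modes in \eqref{eq:decay}.
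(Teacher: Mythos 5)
Your proposal is correct, and in fact it contains two valid proofs, the second of which is precisely the paper's argument: the paper offers no separate proof of the lemma, stating only that it follows from the analysis in the proof of Theorem \ref{thm:heat}, i.e.\ exactly your observation that the higher eigenmodes $f_\lambda$ ($\lambda>0$) integrate to zero by orthogonality to the constant mode $f_0$, while the zero-mode coefficient $c_0(t)=c_0(0)e^{-0\cdot t}$ is frozen by \eqref{eq:decay}. Your first argument --- differentiating under the integral and using $\langle\Delta f,1\rangle_{L^2}=\langle\d f,\d 1\rangle_{L^2}=0$ --- is a genuinely different and more elementary route: it is a pure conservation-law computation that needs only self-adjointness of $\Delta$ (equivalently, the divergence theorem), not the spectral decomposition, global existence, or smooth convergence, and it would survive in settings where the spectral expansion is unavailable or awkward (e.g.\ nonlinear perturbations of the flow, or merely $L^2$ initial data before invoking regularity). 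What the paper's spectral route buys instead is that the conservation law comes for free once the machinery of Theorem \ref{thm:heat} is in place, and it identifies \emph{which} quantity is conserved (the coefficient of the zero mode) in a way that directly motivates the modified flow \eqref{eq:heat.modified} that follows. Both of your arguments are complete; the only hygiene points (interchange of $\partial_t$ with $\int_M$, and of sum with integral) are handled as you indicate.
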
  
  \noindent  We are therefore free to modify \eqref{eq:heat} and consider
\begin{equation}\label{eq:heat.modified}
 \left(\frac{\partial}{\partial t}+\Delta-\lambda_1\right)f=0,
\end{equation}
for functions $f$ with $\int_M f=0$, where $\lambda_1$ is the first positive eigenvalue of $\Delta$ on $M$.  It is easy to see that \eqref{eq:heat.modified} is still parabolic and that if $\int_Mf=0$ initially then it stays zero for all $t$ under \eqref{eq:heat.modified}.  However, under 
\eqref{eq:heat.modified}, we see that the flow no longer converges to a constant, but instead to the projection of $f$ to the $\lambda_1$-eigenspace of $\Delta$ (which may now have several components if $\lambda_1$ is not
a simple eigenvalue).  Again, this flow ``regularizes'' $f$, throwing away all of the higher eigenmodes of $\Delta$ in the limit. 

\paragraph{Forms.} We can also consider the heat flow on differential $k$-forms $\alpha$ on a compact manifold $M$:
\begin{equation}\label{eq:heat.forms}
\left(\frac{\partial}{\partial t}+\Delta\right)\alpha=0,
\end{equation}
where $\Delta$ is the Hodge Laplacian
\begin{equation}
\Delta=\d\d^*+\d^*\d.
\end{equation}
The flow \eqref{eq:heat.forms} is now the gradient flow for the Dirichlet energy
\begin{equation}\label{eq:Dirichlet.forms}
\frac{1}{2}\int_M|\d\alpha|^2+|\d^*\alpha|\vol_M\geq 0
\end{equation}
by a similar argument as before.   Again decomposing $\alpha(t)$ at each time $t$ using eigenforms for $\Delta$, we have the following.

\begin{thm} The heat equation \eqref{eq:heat.forms} for $\alpha(t)$ on a compact manifold starting at a smooth form $\alpha(0)$ exists for all time and converges smoothly to the projection of $\alpha(0)$ to the $0$-eigenforms for $\Delta$, i.e.~the harmonic $k$-forms
\begin{equation}\label{eq:harmonic.forms}
\d\alpha=\d^*\alpha=0.
\end{equation}
\end{thm}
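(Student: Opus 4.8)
The plan is to mimic the proof of Theorem \ref{thm:heat} for functions, replacing the scalar Laplacian by the Hodge Laplacian $\Delta=\d\d^*+\d^*\d$ and the eigenfunctions by eigenforms. First I would note that $\Delta$ is a non-negative, self-adjoint, elliptic operator on the space of $k$-forms on the compact manifold $M$, so by Hodge theory its eigenforms $\alpha_\lambda$ (satisfying $\Delta\alpha_\lambda=\lambda\alpha_\lambda$ with $\lambda\geq 0$) form a complete orthonormal system for the $L^2$ sections of $\Lambda^k T^*M$. This lets me expand the solution at each time as
\begin{equation*}
\alpha(x,t)=\sum_\lambda c_\lambda(t)\,\alpha_\lambda(x),
\end{equation*}
exactly paralleling \eqref{eq:f.decomp}.

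Next I would substitute this expansion into the flow equation \eqref{eq:heat.forms}. Because $\Delta$ commutes with the decomposition, each coefficient decouples and satisfies the ordinary differential equation $c_\lambda'(t)=-\lambda c_\lambda(t)$, giving $c_\lambda(t)=c_\lambda(0)e^{-\lambda t}$ just as in \eqref{eq:decay}. From this explicit formula the long-time existence is immediate, and as $t\to\infty$ every mode with $\lambda>0$ decays to zero, so $\alpha(t)$ converges to the sum of the $\lambda=0$ terms, namely the projection of $\alpha(0)$ onto the $0$-eigenspace of $\Delta$. Finally I would identify this $0$-eigenspace: since $\Delta\alpha=0$ implies $0=\langle\Delta\alpha,\alpha\rangle_{L^2}=|\d\alpha|_{L^2}^2+|\d^*\alpha|_{L^2}^2$ on a compact manifold, the kernel of $\Delta$ consists precisely of the harmonic forms satisfying \eqref{eq:harmonic.forms}, which completes the identification of the limit.

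The routine analytic verifications—that a smooth initial form stays smooth (parabolic regularity), that the eigenvalue expansion converges in the appropriate norms, and that convergence holds smoothly rather than merely in $L^2$—go through exactly as in the scalar case and I would cite the earlier argument rather than repeat it. The one genuinely new ingredient, and the point I expect to require the most care, is the appeal to Hodge theory: unlike the scalar Laplacian, the Hodge Laplacian has a nontrivial kernel in general (the harmonic $k$-forms, of dimension $b_k(M)$), so the limit is not a single canonical form but a projection whose image is topologically determined. The heart of the argument is therefore ensuring the spectral decomposition of $\Delta$ on $k$-forms is valid and correctly identifying its kernel via the integration-by-parts computation above.
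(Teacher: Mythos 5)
Your proposal is correct and follows essentially the same route as the paper, which proves this result exactly by decomposing $\alpha(t)$ into eigenforms of the Hodge Laplacian and repeating the scalar argument of Theorem \ref{thm:heat}, with the kernel of $\Delta$ identified as the harmonic forms. The integration-by-parts identification of the $0$-eigenspace and the remark about the kernel being finite-dimensional of dimension $b_k(M)$ are exactly the points the paper takes for granted from Hodge theory, so there is nothing missing.
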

\noindent The harmonic forms are precisely the critical points of \eqref{eq:Dirichlet.forms} and are clearly absolute minimizers as they are zeros for the energy functional.  

Now, the harmonic forms are only a finite-dimensional space in the space of $k$-forms, so given any initial $k$-form it could well be that the heat flow will just send it to zero, which is clearly a legitimate critical point for the flow (though not an interesting one!).  For example if $\alpha(0)$ is exact or coexact (or the sum of forms of this type), the heat flow will just go to $0$.  

\medskip

To ensure that we find a non-trivial critical point, we could restrict attention to closed $k$-forms: 
\begin{equation}
\d\alpha=0.
\end{equation}
Notice that this is preserved by \eqref{eq:heat.forms} since in this case we have
\begin{equation}\label{eq:heat.closed.forms}
\frac{\partial}{\partial t}\alpha=-\Delta\alpha=-(\d\d^*+\d^*\d)\alpha=-\d\d^*\alpha,
\end{equation}
so in fact we have that $\alpha(t)$ lies in the fixed cohomology class $[\alpha(0)]$ for all time as the right-hand side of \eqref{eq:heat.closed.forms} is exact.  Therefore, if we have that $[\alpha(0)]\neq 0$ is a non-trivial cohomology class, we know that \eqref{eq:heat.closed.forms} will exist for all time and converge to the non-zero harmonic representative of that class (which we know exists and is unique by Hodge theory).  

We could also equally well have restricted to coclosed $k$-forms
\begin{equation}
\d^*\alpha=0,
\end{equation}  
since this is also preserved by a similar argument. This time $*\alpha(t)$ will lie in the fixed cohomology class $[*\alpha(0)]$ for all time and the flow will converge to the Hodge dual of the harmonic representative of $[*\alpha(0)]$.  

We summarize these findings.

\begin{prop}
Suppose that $\alpha(t)$ is a family of $k$-forms on a compact $n$-manifold $M$  solving \eqref{eq:heat.forms}.  
\begin{itemize}
\item[\emph{(a)}]  If  $\d\alpha(0)=0$, $\alpha(t)$ exists for all $t>0$ satisfying $\d\alpha(t)=0$ for all $t$ and converges smoothly to the unique harmonic $k$-form in $[\alpha(0)]\in H^k(M)$.  
\item[\emph{(b)}] If $\d^*\alpha(0)=0$, $\alpha(t)$ exists for all $t>0$ satisfying $\d^*\alpha(t)=0$ for all $t$ and and converges smoothly to the Hodge dual of the unique harmonic $(n-k)$-form in $[*\alpha(0)]\in H^{n-k}(M)$. 
\end{itemize}
\end{prop}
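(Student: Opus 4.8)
The plan is to reduce both statements to the convergence result for the heat equation \eqref{eq:heat.forms} proved above, by checking that the closedness (respectively coclosedness) condition and the relevant cohomology class are preserved along the flow; the analytic content of existence and convergence is then already in hand.

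For part (a), I would first verify that closedness is preserved. Since the Hodge Laplacian commutes with $\d$, the $(k+1)$-form $\d\alpha$ again solves \eqref{eq:heat.forms}, now with initial value $\d\alpha(0)=0$. Uniqueness of solutions to the heat equation on the compact manifold $M$ (which is immediate from the eigenform expansion used above) then forces $\d\alpha(t)=0$ for all $t$. With closedness in hand, the computation \eqref{eq:heat.closed.forms} shows that the velocity $\frac{\partial}{\partial t}\alpha=-\d\d^*\alpha$ is exact at every time, so $[\alpha(t)]=[\alpha(0)]$ in $H^k(M)$ throughout the flow.

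I would then invoke the convergence theorem above, which gives that $\alpha(t)$ exists for all $t>0$ and converges smoothly to the orthogonal projection $h$ of $\alpha(0)$ onto the harmonic $k$-forms. The only remaining point is to identify $h$ with the harmonic representative of $[\alpha(0)]$. This follows from Hodge theory: writing $\alpha(0)=h+\d\beta+\d^*\gamma$, the hypothesis $\d\alpha(0)=0$ gives $\d\d^*\gamma=0$, hence $\|\d^*\gamma\|_{L^2}^2=\langle\d\d^*\gamma,\gamma\rangle_{L^2}=0$ and the coexact part vanishes. Thus $\alpha(0)=h+\d\beta$, so $[h]=[\alpha(0)]$ and $h$ is the unique harmonic form in $[\alpha(0)]$, as claimed.

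Finally, I would deduce part (b) from part (a) using the Hodge star. Since $*$ commutes with $\Delta$, the $(n-k)$-form $*\alpha$ also solves \eqref{eq:heat.forms}, and the condition $\d^*\alpha(0)=0$ is equivalent to $\d(*\alpha(0))=0$, so $*\alpha$ is closed. Applying part (a) to $*\alpha$ shows that $*\alpha(t)$ remains closed --- equivalently $\alpha(t)$ remains coclosed --- and that $*\alpha(t)$ converges to the unique harmonic form in $[*\alpha(0)]\in H^{n-k}(M)$. Since $*$ is an isomorphism that preserves harmonicity and commutes with taking the smooth limit, $\alpha(t)$ converges to the Hodge dual of that harmonic form, which is precisely statement (b). I do not expect a genuinely hard step here, as existence and convergence are supplied by the earlier theorem; the only points needing care are the uniqueness argument that propagates closedness and the Hodge-theoretic identification of the limit with the correct cohomology representative.
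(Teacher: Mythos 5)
Your proof is correct and takes essentially the same route as the paper: show that the closed (resp.\ coclosed) condition and the relevant cohomology class are preserved, invoke the long-time existence and convergence theorem for the heat flow on forms, and identify the limit using Hodge theory. You simply make explicit the details the paper leaves implicit --- the uniqueness argument propagating $\d\alpha=0$ (since $\Delta$ commutes with $\d$), the Hodge-decomposition computation identifying the harmonic projection of $\alpha(0)$ with the harmonic representative of $[\alpha(0)]$, and the reduction of part (b) to part (a) via the Hodge star, which is precisely the paper's ``similar argument''.
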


We might hope, at least naively, that we could have similar good behaviour in geometric flows as to the heat flow, and thus obtain ways to canonically represent classes of geometric structures, just as harmonic forms uniquely represent all cohomology classes.

\subsection{Ricci flow and mean curvature flow}

Geometric flows aim to act on the same principle as the heat flow, two canonical examples being Ricci flow on metrics $g$ and mean curvature flow on immersions $F$ into a Riemannian manifold:
\begin{equation}\label{eq:Ricci.MCF}
 \frac{\partial}{\partial t}g=-2\Ric(g)\quad\text{and}\quad \frac{\partial}{\partial t}F=H,
\end{equation} 
where $\Ric(g)$ denotes the Ricci curvature tensor of $g$ and $H$ denotes the mean curvature vector of the immersion $F$.  
(Two other key examples of geometric flows of significant interest where many results have been obtained are the harmonic map heat flow and Yang--Mills flow, but we do not discuss them here.) 
 Under suitable choices of coordinates, \eqref{eq:Ricci.MCF} can be seen as ``heat flows'', however this time the Laplacian depends on the metric or immersion respectively, and so the flows are nonlinear.

\paragraph{Parabolicity.}  The flows \eqref{eq:Ricci.MCF} are not parabolic due to geometric invariance in the problem: in Ricci flow this is diffeomorphism invariance, and in mean curvature flow this is invariance under reparametrisation.  However, once one 
kills this geometric invariance, one obtains a parabolic equation.   

For example, in Ricci flow, one can apply the so-called DeTurck's trick:
\begin{equation}\label{eq:RicciDT}
 \frac{\partial}{\partial t}h=-2\Ric(h)+\mathcal{L}_{X(h)}h,
\end{equation}
where $X(h)$ is a suitably chosen vector field (depending on the metric $h$) which ensures that \eqref{eq:RicciDT} is parabolic and so has a short time solution which is regularizing.  We can get a solution to Ricci flow from a solution to \eqref{eq:RicciDT} by considering $g=\Phi^*h$, where $\Phi$ are diffeomorphisms defined by
\begin{equation}\label{eq:DT}
 \frac{\partial}{\partial t}\Phi=-X(h) \quad\text{and} \quad \Phi(0)=\id.
\end{equation}

\begin{prop}\label{prop.RicciDT}
Suppose that $h$ are metrics satisfying \eqref{eq:RicciDT} and $\Phi$ are diffeomorphisms satisfying \eqref{eq:DT}.  Then $g=\Phi^*h$ satisfies the Ricci flow in \eqref{eq:Ricci.MCF}.
\end{prop}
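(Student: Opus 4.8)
The plan is to differentiate $g=\Phi^*h$ directly in time and then use the two defining equations \eqref{eq:RicciDT} and \eqref{eq:DT} together with the diffeomorphism invariance of the Ricci tensor. The only genuinely nontrivial ingredient is the formula for the time derivative of the pullback of a \emph{time-dependent} tensor by a \emph{time-dependent} family of diffeomorphisms, so I would isolate that first.

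First I would record this formula. If $\Phi_t$ is the family of diffeomorphisms generated by the time-dependent vector field $V_t$, in the sense that $\frac{\partial}{\partial t}\Phi_t=V_t\circ\Phi_t$ with $\Phi_0=\id$, then for any smooth time-dependent tensor field $T_t$ one has
$$\frac{\partial}{\partial t}(\Phi_t^*T_t)=\Phi_t^*\Big(\frac{\partial}{\partial t}T_t+\mathcal{L}_{V_t}T_t\Big).$$
This combines the chain rule in $t$ with the standard fact that the Lie derivative measures the infinitesimal change of a tensor under the flow of $V_t$. The only care needed is to read the schematic equation \eqref{eq:DT} correctly as $\frac{\partial}{\partial t}\Phi_t=-X(h_t)\circ\Phi_t$, so that here $V_t=-X(h_t)$.

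Applying this with $T_t=h_t$ gives
$$\frac{\partial}{\partial t}g=\Phi^*\Big(\frac{\partial}{\partial t}h-\mathcal{L}_{X(h)}h\Big).$$
Substituting \eqref{eq:RicciDT} for $\frac{\partial}{\partial t}h$, the two Lie derivative terms cancel exactly, leaving
$$\frac{\partial}{\partial t}g=\Phi^*\big(-2\Ric(h)\big)=-2\,\Phi^*\Ric(h).$$
Finally I would invoke the naturality of the Ricci tensor under diffeomorphisms, namely $\Phi^*\Ric(h)=\Ric(\Phi^*h)=\Ric(g)$, to conclude that $\frac{\partial}{\partial t}g=-2\Ric(g)$, which is precisely the Ricci flow in \eqref{eq:Ricci.MCF}.

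The main obstacle is essentially bookkeeping: fixing the sign in the Lie derivative formula (equivalently, the correct interpretation of $V_t=-X(h_t)$) so that the cancellation genuinely occurs. Everything else is formal, since the exact cancellation of the $\mathcal{L}_{X(h)}h$ terms is exactly what the DeTurck vector field is engineered to produce, and the diffeomorphism invariance of $\Ric$ is a standard naturality property requiring no real computation.
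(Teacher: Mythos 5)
Your proposal is correct and follows essentially the same route as the paper: differentiate $g=\Phi^*h$ using the formula $\frac{\partial}{\partial t}(\Phi_t^*T_t)=\Phi_t^*\big(\frac{\partial}{\partial t}T_t+\mathcal{L}_{V_t}T_t\big)$ with $V_t=-X(h_t)$, cancel the Lie derivative terms against \eqref{eq:RicciDT}, and invoke naturality of $\Ric$ under diffeomorphisms. The paper compresses this into the single chain of equalities \eqref{eq:RFDTHM}, whereas you spell out the pullback-derivative lemma and the sign bookkeeping explicitly, but the mathematical content is identical.
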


\begin{proof}
By \eqref{eq:RicciDT} and \eqref{eq:DT},
\begin{equation}\label{eq:RFDTHM}
 \frac{\partial}{\partial t}g=\frac{\partial}{\partial t}\Phi^*h=\Phi^*\frac{\partial}{\partial t}h-\Phi^*\mathcal{L}_{X(h)}h=-2\Phi^*\Ric(h)=-2\Ric(g).
\end{equation}
\end{proof}

This result is great but, it is natural to ask: what is a good choice of $X(h)$?  The idea is, given $h$, for any symmetric 2-tensor $k$ to consider the ``gravitational tensor''
\begin{equation}\label{eq:grav}
G(k)=k-\frac{1}{2}(\tr k)h,
\end{equation}
whose divergence is given by the 1-form:
\begin{equation}\label{eq:div.grav}
\Div G(k)=\Div(k)+\frac{1}{2}\d(\tr k).
\end{equation}
(Here, by the divergence we mean the formal adjoint of the map $X^{\flat}\mapsto \frac{1}{2}\mathcal{L}_{X}h$, so that $\Div(k)$ is the negative of the trace on the first two indices of $\nabla k$; i.e.~$\Div(k)_j=-\nabla_i k_{ij}$.  The musical isomorphisms, $\nabla$ and trace are all defined by $h$.) 
If $k$ is a \emph{fixed} Riemannian metric then, using $h$, we can view $k$ as an invertible map on 1-forms and so 
\begin{equation}\label{eq:DT.vfield}
X(h)=(k^{-1}\Div G(k))^{\sharp},
\end{equation}
where the musical isomorphism is again given by $h$, is a well-defined vector field.  

\begin{thm}
If we choose the vector field $X(h)$ as in \eqref{eq:DT.vfield}, the 
Ricci--DeTurck flow \eqref{eq:RicciDT} is parabolic.
\end{thm}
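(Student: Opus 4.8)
The plan is to show that the operator on the right-hand side of \eqref{eq:RicciDT}, regarded as a second-order differential operator acting on the symmetric $2$-tensor $h$, has principal symbol equal to $|\xi|^2\,\id$ for every nonzero covector $\xi$, where $|\xi|^2=h^{pq}\xi_p\xi_q$; since this is positive definite, the system is strongly parabolic and short-time existence together with the regularizing property follow from the standard theory for quasilinear parabolic systems. First I would isolate the principal symbol of $-2\Ric(h)$. Freezing the coefficients of $h$ and retaining only its second derivatives, the expansion $\Ric_{ij}=\partial_k\Gamma^k_{ij}(h)-\partial_i\Gamma^k_{kj}(h)+(\text{lower order})$ gives that the leading part of $-2\Ric_{ij}$ is $h^{pq}\partial_p\partial_q h_{ij}-h^{pq}(\partial_i\partial_p h_{qj}+\partial_j\partial_p h_{qi})+h^{pq}\partial_i\partial_j h_{pq}$. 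Writing $(\xi\cdot v)_j=h^{pq}\xi_p v_{qj}$ and replacing each $\partial_p\partial_q$ by $\xi_p\xi_q$, the symbol acting on a symmetric $2$-tensor $v$ is
\[
\sigma(-2\Ric)(\xi)v_{ij}=|\xi|^2 v_{ij}-\xi_i(\xi\cdot v)_j-\xi_j(\xi\cdot v)_i+\xi_i\xi_j\tr v,
\]
where $\tr v=h^{pq}v_{pq}$. Evaluating on $v=\xi\otimes\xi$ yields $0$, so this symbol is degenerate: this is the analytic manifestation of the diffeomorphism invariance that obstructs parabolicity of the unmodified Ricci flow.

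Next I would compute the principal symbol of the correction $\mathcal{L}_{X(h)}h$ for the specific field \eqref{eq:DT.vfield}. Because $k^{-1}$ and the musical isomorphism are algebraic in $h$, only the part of $\Div G(k)$ that is linear in $\partial h$ enters the symbol; by \eqref{eq:grav}--\eqref{eq:div.grav} this part comes from the Christoffel symbols of $h$ hidden in $\Div(k)$ and from $\partial_j(\tr k)$. Carrying out the linearization at a point where $h=\id$, so that raising and lowering are trivial, one finds after cancelling terms in pairs that the $\partial h$-linear symbol of $\Div G(k)$ is
\[
\sigma(\Div G(k))(\xi)v_l=k_{lm}(\xi\cdot v)_m-\tfrac12(k\xi)_l\,\tr v,
\]
with $(k\xi)_l=k_{lm}\xi_m$; the mixed terms produced by $G$ and by the Christoffel symbols cancel, which is exactly the role of subtracting $\tfrac12(\tr k)h$ in \eqref{eq:grav}. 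Applying $k^{-1}$ then inverts the $k$-weighting and produces the $k$-independent $1$-form $\Xi_j=(\xi\cdot v)_j-\tfrac12\xi_j\tr v$. Since $X$ already carries one derivative of $h$ and $(\mathcal{L}_X h)_{ij}$ has leading part $\partial_i X_j+\partial_j X_i$, the symbol of the DeTurck term is
\[
\sigma(\mathcal{L}_{X(h)}h)(\xi)v_{ij}=\xi_i\Xi_j+\xi_j\Xi_i=\xi_i(\xi\cdot v)_j+\xi_j(\xi\cdot v)_i-\xi_i\xi_j\tr v,
\]
precisely the negative of the non-Laplacian terms above.

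Adding the two contributions, the off-diagonal terms cancel and $\sigma(-2\Ric+\mathcal{L}_{X(h)}h)(\xi)v=|\xi|^2 v$, which is positive definite for all $\xi\neq0$; this is the positive-definiteness of the principal symbol that defines strong parabolicity, and \eqref{eq:RicciDT} then admits a short-time regularizing solution. I expect the main obstacle to be the bookkeeping in the second paragraph: one must track every $\partial h$-linear term through $\Div$, through $G$, through the inversion $k^{-1}$ and through the final Lie derivative, and verify both that the terms spoiling positivity cancel in pairs and that the explicit $k$-factors introduced by viewing $k$ as a map on $1$-forms in \eqref{eq:DT.vfield} are exactly those inverted by $k^{-1}$, so that the leading symbol is the clean $k$-independent expression above. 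Once this cancellation is confirmed, the remaining implication --- positive-definite principal symbol $\Rightarrow$ parabolic --- is immediate.
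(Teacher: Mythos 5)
Your proposal is correct and takes essentially the same approach as the paper: the paper's proof simply asserts that, with $X(h)$ as in \eqref{eq:DT.vfield}, the linearisation of \eqref{eq:RicciDT} is the heat equation $\frac{\partial}{\partial t}h=-\Delta h$, and your symbol computation $\sigma(\xi)v=|\xi|^2v$ is exactly that ``straightforward'' computation carried out explicitly (with the correct cancellation between the symbol of $-2\Ric$ and that of $\mathcal{L}_{X(h)}h$, including the role of the $-\tfrac12(\tr k)h$ term and the paper's sign convention for $\Div$). Nothing is missing.
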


\begin{proof}
It is straightforward to compute that, with the choice of $X(h)$ in \eqref{eq:DT.vfield}, the linearisation of \eqref{eq:RicciDT} is simply
\begin{equation}
\frac{\partial}{\partial t}h=-\Delta h,
\end{equation}
the heat equation on $h$.
\end{proof}

\noindent Moreover, \eqref{eq:DT} is harmonic map flow, which is parabolic, so we can solve \eqref{eq:RicciDT} and \eqref{eq:DT} uniquely for short time by parabolic theory, and hence obtain a unique short time solution to the Ricci flow by Proposition \ref{prop.RicciDT}.

\medskip

Now, by analogy with the heat flow, in Ricci flow the ``eigenmodes'' we need to consider are solutions to 
\begin{equation}
 \Ric(g)=\lambda g;
\end{equation}
in other words, Einstein metrics.  Here the ``eigenvalues'' $\lambda$ have no distinguished sign and so, by analogy with the heat flow analysis above, we cannot say what will happen along the flow in general.  However, if the 
flow exists for all time and converges, then it must tend to a Ricci-flat metric - the ``zero mode'' in the expansion of $g$ in Einstein metrics, if you will.  Similarly, for mean curvature flow, if the flow exists for all time and 
converges, we would obtain a minimal immersion (and the ``expansion'' of the immersion should be into constant mean curvature immersions).  We see in both cases that we are breaking up our geometric object into pieces of 
significant interest.  Moreover, the long-time existence and convergence of the flow allows us both to find the special object (a Ricci-flat metric or minimal immersion) and, at the same time, show that our initial geometric 
object can be deformed smoothly into the special one, which again is an important and challenging problem to solve.

\paragraph{Compact surfaces.} To see the power of geometric flows it is instructive to look at Ricci flow in dimension 2.  Here, Ricci curvature is just the Gauss curvature of the surface (up to a multiple) and there are three possibilities on a compact 
orientable surface.
\begin{itemize}
\item The flow exists for all time and converges.  This means that the surface has a flat metric, and so must have genus 1 (by Gauss--Bonnet).
 \item The flow exists for all time but does not converge.  In this case, just as when we perturbed the heat flow in \eqref{eq:heat.modified},  we can modify the Ricci flow and show that this modified flow exists for all time and converges to a hyperbolic metric.  Thus the 
surface must have genus at least 2.
\item The flow exists for only a finite time.  This does not have a direct heat flow analogue, but one can again modify the Ricci flow as in the previous case (now by adding a term with the opposite sign), and show that this converges now to a constant positive 
curvature metric, which means that the surface must be a sphere.  This is the difficult case in the analysis of the Ricci flow and this is typical of geometric flows: the case corresponding to ``negative eigenvalues''
(which do not happen for the Laplacian in the heat flow) is the most challenging to understand.
\end{itemize}
Thus, the Ricci flow in dimension 2 gives an alternative means to prove the uniformization theorem.  In particular, 
that constant curvature metrics exist on any compact orientable surface, and the topology of the surface uniquely determines the 
sign of the constant curvature.

\paragraph{Gradient flow.} Finally, one can also interpret \eqref{eq:Ricci.MCF} as gradient flows: in the case of mean curvature flow this is nothing but the negative gradient flow of the volume functional on immersions, but for Ricci flow the gradient flow
interpretation is more subtle and involved so we shall not describe it here. 

 Needless to say, the fact that they are gradient flows is very helpful, since then one has some expectation of what one might hope to happen along the flow, 
as one has a monotone quantity (the analogue of the Dirichlet energy) along the flow which is trying to reach a critical value for the functional.  

 However, even with the gradient flow point-of-view, the nonlinearity of the problem and the potential complexity of the 
topology of the space of geometric objects we are considering means that we cannot always hope for the analysis of our flow to be straightforward and to go as expected.  For example, the Ricci flow and mean curvature flow 
have special features (both good and bad) due to nonlinearity which simply cannot possibly occur in the standard heat flow.

\subsection{Singularities}

A singularity in a geometric flow is a point where the flow cannot be continued, because some quantity blows up to infinity.  We already saw this in the Ricci flow in dimension 2, where there is always a singularity in finite time if we work on a sphere.  Singularities may sound bad, and 
they definitely can be, but they can also be very helpful because they may tell you that you need to break up your geometric object into several pieces to get canonical objects.  This happens for example in 3-dimensional Ricci flow, where singularities can be used to decide how to break up the 3-manifold according to Thurston's Geometrization Conjecture (now a theorem by Perelman's work).

The question is: what happens at a singularity geometrically?  In good situations the singularity will be modelled on a special solution to the flow called a soliton.  By ``modelled on'' we mean that by appropriately rescaling the flow around the singular point, both in space and time, in the limit we should see a soliton.  

\begin{dfn}
A soliton is a solution to the flow which is ``self-similar'', meaning that it moves very simply under the flow, just under rigid motions and diffeomorphisms or reparametrisations (or whatever notion of invariance is present in the problem).  

Solitons which just move under diffeomorphisms are called steady, those which rescale getting smaller are called shrinking, and those which rescale getting larger are called expanding.  

That is usually all of them (like in Ricci flow), but in mean curvature flow a soliton can also just translate, which is, rather unimaginatively, called a translating soliton.  
\end{dfn}

Simple examples of solitons in Ricci flow are given by constant curvature metrics: flat metrics are steady (in fact critical points), constant positive curvature metrics are shrinking (like standard round spheres) and constant negative curvature metrics are expanding (like hyperbolic space).  

From this point of view we see that shrinking solitons are the ones we should be most concerned with, since they will become singular in finite time, shrinking away.  However, steady and expanding solitons also play an important role. 

Steady solitons are potentially where the flow can get ``stuck'' going round and round under diffeomorphisms and never converging.  Non-stationary compact examples of steady solitons are ruled out if you have a standard gradient flow interpretation of the flow,  since the corresponding functional would be constant for steady solitons, which is a contradiction unless they are stationary.  This shows one of the benefits of knowing that your geometric flow is the gradient flow of some functional.  

On the other hand, expanding solitons give a potential mechanism to escape from a singularity, since they expand away from a singular geometric object.

It is therefore clear that understanding singularities and solitons is an important part of the study of any geometric flow.  

\section{$\GG_2$ structures}

Given this discussion of geometric flows, we are now motivated to ask the question: are there (useful) geometric flows of $\GG_2$ structures on a (compact) 7-manifold $M$ 
and what do they want to achieve?  We should perhaps not expect there to be just one useful 
flow to consider: for immersions, both mean curvature flow and inverse mean curvature flow have important geometric uses, for example.  We therefore need to think about what are the important classes of $\GG_2$ structures 
that we want to analyse and what we expect to be ``canonical'' representatives for these classes.  (For details about $\GG_2$ structures, which are equivalent to positive 3-forms, see for example \cite{Joyce}.)

\subsection{Torsion-free and torsion forms}

Clearly the most important class of $\GG_2$ structures are the torsion-free ones, given by positive 3-forms $\varphi$ on $M$ satisfying
\begin{equation}\label{eq:torsion.free}
 \nabla_{\varphi}\varphi=0\quad\Leftrightarrow\quad \d\varphi=\d^*_{\varphi}\varphi=0\quad\Leftrightarrow\quad \Hol(g_{\varphi})\subseteq\GG_2.
\end{equation}
(We are being slightly sloppy in the last equivalence, since given a metric there are infinitely many $\GG_2$ structures inducing that metric, 
so we mean that the holonomy $\Hol(g_{\varphi})$ of $g_{\varphi}$ is contained in $\GG_2$ if and only if there is some $\GG_2$ structure $\varphi$ inducing $g_{\varphi}$ which is closed and coclosed.)
We also know that we may equivalently define $\GG_2$ structures on oriented, spin, Riemannian 7-manifolds using unit spinors $\sigma$, and the condition for the $\GG_2$ structure to be torsion-free is that $\sigma$ is parallel with respect to the spin connection
:
\begin{equation}
 \nabla 
 \sigma=0.
\end{equation}
(We can recall the relationship between unit spinors and positive 3-forms:
\begin{equation}
 4\sigma\otimes\sigma=1+\varphi+*_{\varphi}\varphi+\vol_\varphi,
\end{equation}
up to appropriate normalizations and sign conventions.)

\paragraph{Closed and coclosed $\GG_2$ structures.} However, there are other obvious (potentially) important classes of $\GG_2$ structures, for example closed $\GG_2$ structures
\begin{equation}\label{eq:closed}
 \d\varphi=0,
\end{equation}
or coclosed $\GG_2$ structures
\begin{equation}\label{eq:coclosed}
 \d^*_{\varphi}\varphi=0.
\end{equation}
It is worth noting that, on the face of it, \eqref{eq:closed} is much stronger than \eqref{eq:coclosed}: the first is a condition on a 4-form in 7-dimensions (so 35 equations at each point), whereas the second is a condition on a 5-form in 7-dimensions (so 21 equations at each point).

Both conditions \eqref{eq:closed} and \eqref{eq:coclosed} can be satisfied independently on any open 7-manifold admitting a $\GG_2$ structure by a straightforward h-principle argument; thus one can say (in some sense) that these conditions are only truly meaningful on compact 7-manifolds.    In fact, \eqref{eq:coclosed} can always be satisfied on a compact 7-manifold admitting a $\GG_2$ structure also by an h-principle \cite{CrowNord}, but it is currently unknown whether the same is true for condition \eqref{eq:closed} or not: this again reflects the fact that \eqref{eq:closed} is a stronger condition than \eqref{eq:coclosed}.  

\begin{thm}\label{thm:hprinciple}
Let $\varphi$ be a $\GG_2$ structure on $M$.
\begin{itemize}
\item[\emph{(a)}] If $M$ is open, then there exists a $\GG_2$ structure $\tilde{\varphi}$ $C^0$-close to $\varphi$ satisfying \eqref{eq:closed}.
\item[\emph{(b)}] If $M$ is either open or compact, then there exists a $\GG_2$ structure $\tilde{\varphi}$ $C^0$-close to $\varphi$ satisfying \eqref{eq:coclosed}. 
\end{itemize}
\end{thm}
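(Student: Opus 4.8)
The plan is to read both statements as instances of Gromov's h-principle for differential forms constrained to lie in a fibrewise-open subset of $\Lambda^k T^*M$, the essential input being that positivity of a $\GG_2$ $3$-form is a pointwise open condition. Write $\Lambda^3_+\subset\Lambda^3T^*M$ for the set of positive $3$-forms; each fibre is the single open orbit $\cong\GL^+(7,\R)/\GG_2$, so a section of $\Lambda^3_+$ is exactly a $\GG_2$ structure, and $\varphi$ certifies that such sections exist. For part (a) I want a \emph{closed} section of $\Lambda^3_+$ that is $C^0$-close to $\varphi$, realizing \eqref{eq:closed}. For part (b) it is cleanest to dualize: since $\d^*_\varphi\varphi=\pm *_\varphi\d *_\varphi\varphi$, condition \eqref{eq:coclosed} is equivalent to $\d\psi=0$ for the associated $4$-form $\psi=*_\varphi\varphi$, which ranges over the fibrewise-open set $\Lambda^4_+\subset\Lambda^4T^*M$ of positive $4$-forms. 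Thus (b) asks for a closed section of $\Lambda^4_+$.

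I would first dispose of the open-manifold cases, which cover (a) in full and half of (b). The relevant tool is the h-principle for closed forms lying in an open set $\mathcal{A}$: a formal solution can be deformed to a genuine closed section of $\mathcal{A}$, and on an \emph{open} manifold this holds with no further hypothesis, since $M$ deformation retracts onto a subcomplex of positive codimension where the flexibility is manifest. A formal solution is furnished directly by the existing positive form, namely $\varphi$ for (a) and $\psi$ for (b), and the construction can be carried out within a $C^0$-neighbourhood of the given form. This proves (a) and the open case of (b).

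The remaining and genuinely harder case is the \emph{compact} coclosed statement, where the open-manifold retraction is unavailable. Here one needs the version of the h-principle valid on closed manifolds, which requires the fibrewise \emph{ampleness} of $\Lambda^4_+$: for every nonzero $\xi\in T^*_xM$ and every $\beta\in\Lambda^4T^*_xM$, the positive $4$-forms lying in the affine slice $\beta+\xi\wedge\Lambda^3T^*_xM$ should have convex hull equal to the whole slice. This is precisely what powers convex integration: a high-frequency correction of the form $\d\bigl(g\cos(\lambda\langle\,\cdot\,,\xi\rangle)\,\eta\bigr)$ with $\eta\in\Lambda^3$ moves $\psi$, to leading order, in the direction $\xi\wedge\eta$ while staying in the fixed de Rham class, and ampleness guarantees that the oscillatory averages can drive $\d\psi$ to zero while keeping $\psi$ positive. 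Verifying this ampleness for positive $4$-forms is the crux, and it is exactly the content of the theorem of Crowley--Nordström \cite{CrowNord}, which I would cite to conclude (b) on compact manifolds.

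The main obstacle is therefore this ampleness verification, and it is also what explains the asymmetry advertised before the statement. The compact case of (a) would instead demand ampleness of the positive $3$-form cone $\Lambda^3_+\subset\Lambda^3T^*M$, relative to slices $\beta+\xi\wedge\Lambda^2T^*_xM$, and this is not known to hold and may well fail. Consequently I would not expect the argument for (a) to survive the passage from open to compact manifolds, consistent with the fact that the existence of closed $\GG_2$ structures on compact $7$-manifolds remains open.
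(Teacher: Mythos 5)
Your proposal is correct and takes essentially the same route as the paper, which offers no proof of its own but justifies the statement exactly as you do: Gromov's h-principle for closed forms valued in a fibrewise-open subset disposes of the open-manifold cases, while the compact case of (b) is delegated to Crowley--Nordstr\"om \cite{CrowNord}, whose contribution is, as you correctly identify, the verification that the positive $4$-forms meet each principal slice $\beta+\xi\wedge\Lambda^3T_x^*M$ in an ample set, so that convex integration applies on closed manifolds. The one caveat, inherited from the informal wording of the theorem itself rather than introduced by you, is that ``$C^0$-close'' cannot mean arbitrarily $C^0$-close: if $\d\varphi\neq 0$, then for any $4$-chain $\Omega$ and any \emph{closed} $\tilde{\varphi}$ Stokes' theorem gives $|\int_\Omega \d\varphi|=|\int_{\partial\Omega}(\varphi-\tilde{\varphi})|\leq \|\varphi-\tilde{\varphi}\|_{C^0}\Vol(\partial\Omega)$, which imposes a definite positive lower bound on the attainable distance, so the approximation claims in your open-manifold step (and in the statement) must be read in a controlled rather than arbitrary sense, as made precise in \cite{CrowNord}.
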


One can interpret the h-principle result for coclosed $\GG_2$ structures as both positive and negative.  On the one hand, it is good because we can always assume the condition \eqref{eq:coclosed} holds for $\varphi$ if we want, though we have very little control on the 
$\varphi$ produced by the h-principle: we can assume it is $C^0$-close to our original $\GG_2$ structure but the method only produces $\varphi$ which will be very far away in the $C^1$-topology.  On the other hand, it says that the condition \eqref{eq:coclosed} is, in a sense, meaningless and that talking about coclosed $\GG_2$ structures is the same as talking about all $\GG_2$ structures, which becomes a topological rather than a geometric question.  

\paragraph{Torsion forms.} One could also conceivably look at other special torsion classes by setting various combinations of the intrinsic torsion forms to vanish, recalling that these are given by
\begin{equation}\label{eq:torsion}
 \d\varphi=\tau_0*\varphi+3\tau_1\wedge\varphi+*_{\varphi}\tau_3\quad\text{and}\quad \d*_{\varphi}\varphi=4\tau_1\wedge*_{\varphi}\varphi+\tau_2\wedge\varphi,
\end{equation}
where $\tau_0\in C^{\infty}(M)$, $\tau_1\in \Omega^1(M)$, $\tau_2\in\Omega^2_{14}(M)$, $\tau_3\in\Omega^3_{27}(M)$, with the standard notation referring to the type decomposition of forms determined by $\varphi$ (see, for example, \cite{Bryant}).  Recall that 
$\beta\in\Omega^2_{14}(M)$ if and only if
\begin{equation}\label{eq:omega214}
\beta\wedge\varphi=-*_{\varphi}\beta\quad\Leftrightarrow\quad \beta\wedge*_{\varphi}\varphi=0,
\end{equation}
and that $\gamma\in\Omega^3_{27}(M)$ if and only if
\begin{equation}
 \gamma\wedge\varphi=\gamma\wedge*_{\varphi}\varphi=0.
\end{equation}
Moreover, recall that we have an isomorphism $i_{\varphi}:S^2T^*M= \text{Span}\{g_{\varphi}\}\oplus S^2_0T^*M\to \Lambda^3_1\oplus\Lambda^3_{27}$ given on decomposable elements $\alpha\circ\beta$ by
$$i_{\varphi}(\alpha\circ\beta)=\alpha\wedge*_{\varphi}(\beta\wedge *_{\varphi}\varphi)+\beta\wedge*_{\varphi}(\alpha\wedge*_{\varphi}\varphi).$$
We also have an explicit way to invert $i_{\varphi}$ using $j_{\varphi}:\Lambda^3T^*M\to S^2T^*M$ given by
$$j_{\varphi}(\gamma)(u,v)=*_{\varphi}(u\lrcorner\varphi\wedge v\lrcorner\varphi\wedge\gamma).$$
Notice that $i_{\varphi}(g_{\varphi})=6\varphi$, $j_{\varphi}(\varphi)=6g_{\varphi}$ and $\Ker j_{\varphi}=\Lambda^3_7$.

\paragraph{Other classes.} A particular class of $\GG_2$ structures one could consider are the nearly parallel $\GG_2$ structures
\begin{equation}\label{eq:nearly.parallel}
 \d\varphi=\tau_0*_{\varphi}\varphi
\end{equation}
for a constant $\tau_0$.  These structures define Einstein metrics with non-negative scalar curvature, and so there is a potential relation between these structures and our discussion of the Ricci flow above.     

One could also view matters in terms of spinors, and study geometric flows of unit spinors.  
One can then try studying parallel spinors or, more generally, Killing spinors, as well as other special types of spinors (e.g.~twistor spinors).

\subsection{General flows}

Based on this discussion, it is clear that there are many possible geometric flows one could write down, and each one could potentially tackle different open problems in $\GG_2$ geometry.

That said, one can describe how various key quantities vary under a general flow of $\GG_2$ structures (see \cite{Bryant,Karigiannis}).  
By the type decomposition of 3-forms, any geometric flow of $\GG_2$ structures can be written
\begin{equation}\label{eq:general.flow}
 \frac{\partial}{\partial t}\varphi=3f_0\varphi+*_{\varphi}(f_1\wedge\varphi)+f_3=i_{\varphi}(h)+X\lrcorner*_{\varphi}\varphi
\end{equation}
where $f_0\in C^{\infty}(M)$, $f_1\in\Omega^1(M)$ and $f_3\in\Omega^3_{27}(M)$, $h\in C^{\infty}(S^2T^*M)$ and $X\in C^{\infty}(TM)$ at each time $t$.  From this one can see that the metric and 4-form evolve as follows. 

\begin{prop} Along \eqref{eq:general.flow} we have
\begin{equation}\label{eq:metric}
 \frac{\partial}{\partial t} g_{\varphi}=2f_0g+\frac{1}{2}j_{\varphi}(f_3)=2h
\end{equation}
and
\begin{equation}\label{eq:starphi}
 \frac{\partial}{\partial t} *_{\varphi}\!\varphi=4f_0*\varphi+f_1\wedge\varphi-*_{\varphi}f_3.
\end{equation}
\end{prop}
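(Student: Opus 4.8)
The plan is to exploit the fact that both $g_\varphi$ and the dual $4$-form $*_\varphi\varphi$ are \emph{pointwise algebraic} functions of $\varphi$, so the two evolution equations are obtained purely by linearising these algebraic maps and then decomposing the variation $\frac{\partial}{\partial t}\varphi$ into its three $\GG_2$-irreducible pieces: the scalar part $3f_0\varphi\in\Lambda^3_1$, the part $*_\varphi(f_1\wedge\varphi)=X\lrcorner*_\varphi\varphi\in\Lambda^3_7$, and $f_3\in\Lambda^3_{27}$. Since the linearisations of $\varphi\mapsto g_\varphi$ and $\varphi\mapsto *_\varphi\varphi$ are $\GG_2$-equivariant, they respect this splitting, so I would treat each piece separately and add the contributions.

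For the metric \eqref{eq:metric} I would establish the three contributions as follows. The scalar piece is pinned down entirely by homogeneity: since $g_{c\varphi}=c^{2/3}g_\varphi$ for a constant $c>0$, the direction $\frac{\partial}{\partial t}\varphi=3f_0\varphi$ forces $\frac{\partial}{\partial t}g_\varphi=\tfrac{2}{3}\cdot 3f_0\,g=2f_0 g$. The $\Lambda^3_7$ piece contributes nothing, because the positive $3$-forms inducing a fixed metric and orientation form a single $\SO(7)/\GG_2$ orbit whose tangent space at $\varphi$ is exactly $\Lambda^3_7$; hence any variation of the form $X\lrcorner*_\varphi\varphi$ is metric-preserving. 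For the $\Lambda^3_{27}$ piece I would differentiate the defining relation $u\lrcorner\varphi\wedge v\lrcorner\varphi\wedge\varphi=6g_\varphi(u,v)\vol_\varphi$ (equivalently $j_\varphi(\varphi)=6g_\varphi$) in the direction $f_3$, using that the induced volume variation $\tfrac12(\tr_g\tfrac{\partial g}{\partial t})\vol_\varphi$ vanishes for this traceless direction, and collect the resulting symmetric tensor as $\tfrac12 j_\varphi(f_3)$. Summing the three gives the first equality $\frac{\partial}{\partial t}g_\varphi=2f_0 g+\tfrac12 j_\varphi(f_3)$; the second equality $=2h$ is then just the reading-off of the symmetric $2$-tensor $h$ in the decomposition $\frac{\partial}{\partial t}\varphi=i_\varphi(h)+X\lrcorner*_\varphi\varphi$, since by equivariance the $i_\varphi(h)$-part of the variation is precisely the one responsible for the change in metric.

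For the $4$-form \eqref{eq:starphi} I would use the product rule for the $\varphi$-dependent Hodge star,
\[
\frac{\partial}{\partial t}(*_\varphi\varphi)=\Big(\tfrac{\partial}{\partial t}*_\varphi\Big)\varphi+*_\varphi\tfrac{\partial}{\partial t}\varphi .
\]
The second term is immediate: since $(*_\varphi)^2=\id$ on $4$-forms in dimension $7$, one finds $*_\varphi\frac{\partial}{\partial t}\varphi=3f_0*_\varphi\varphi+f_1\wedge\varphi+*_\varphi f_3$. For the first term I would insert the standard variation-of-Hodge-star formula, which depends only on $\frac{\partial}{\partial t}g_\varphi=2h$ and splits into a conformal (trace) part and a traceless part. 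The conformal part, coming from $2f_0 g$, acts on a $3$-form in $7$ dimensions through the weight $n-2p=1$ and contributes $f_0*_\varphi\varphi$, which combines with the $3f_0*_\varphi\varphi$ above to give the expected $4f_0*_\varphi\varphi$ in $\Lambda^4_1$ (a reassuring cross-check, consistent with $*_\varphi\varphi$ being homogeneous of degree $4/3$ in $\varphi$). The traceless part, coming from $\tfrac12 j_\varphi(f_3)$, lands in $\Lambda^4_{27}$ and, after re-expressing $j_\varphi(f_3)$ in terms of $f_3$ via $i_\varphi$, contributes $-2*_\varphi f_3$, which combines with the $+*_\varphi f_3$ already found to yield the net $-*_\varphi f_3$. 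Since $\frac{\partial}{\partial t}g_\varphi$ is a symmetric $2$-tensor it has no $\Lambda_7$-type component, so the Hodge-star variation contributes nothing in $\Lambda^4_7$ and the term $f_1\wedge\varphi$ survives unchanged; collecting the three types gives \eqref{eq:starphi}.

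I would expect the genuine work to be concentrated entirely in the $\Lambda^3_{27}\leftrightarrow\Lambda^4_{27}$ channel. Getting the constant $\tfrac12$ in $\tfrac12 j_\varphi(f_3)$ correct requires carefully inverting $i_\varphi$ on $\Lambda^3_{27}$ (where the proportionality constant differs from the one on $\Lambda^3_1$), and producing the matching $-*_\varphi f_3$ relies on tracking the precise coefficient in the traceless part of the Hodge-star variation. By contrast, the scalar channel is fixed painlessly by homogeneity and the $\Lambda_7$ channel by the orbit argument, so these serve mainly as consistency checks; the only real obstacle is the bookkeeping of the $27$-dimensional component and its interaction with the variation of $*_\varphi$.
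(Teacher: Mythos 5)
First, note that the paper contains \emph{no} proof of this proposition: it is stated as imported from the references of Bryant and Karigiannis, so there is no in-paper argument to compare with, and your attempt has to be judged on its own terms. Your overall plan is the standard one used in those references: linearise the pointwise algebraic maps $\varphi\mapsto g_\varphi$ and $\varphi\mapsto *_\varphi\varphi$, use $\GG_2$-equivariance to treat the three irreducible channels separately, fix the $\Lambda^3_1$ channel by homogeneity ($g_{c\varphi}=c^{2/3}g_\varphi$), kill the $\Lambda^3_7$ channel by the $\SO(7)/\GG_2$-orbit argument, and pin down the $\Lambda^3_{27}$ channel by computation. The constants you assert for the first equality of \eqref{eq:metric} and for \eqref{eq:starphi} are all correct ($2f_0g$, $\tfrac12 j_\varphi(f_3)$, $4f_0*_\varphi\varphi$, $f_1\wedge\varphi$, $-*_\varphi f_3$). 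The two genuinely computational constants (the $\tfrac12$ and the $-2$) are asserted rather than derived, but you flag this honestly, and since Schur's lemma reduces each to a single universal constant that can be fixed by evaluating on one explicit example, that part of the outline is sound.

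The genuine gap is your treatment of the second equality of \eqref{eq:metric}, $\frac{\partial}{\partial t}g_\varphi=2h$, which you dismiss as ``just reading off'' by equivariance. Equivariance only tells you that $\frac{\partial}{\partial t}g_\varphi$ is some universal multiple of the trace part of $h$ plus some universal multiple of its trace-free part; those multiples must be computed, and they depend on the normalisation of $i_\varphi$. With the paper's own normalisation $i_\varphi(g_\varphi)=6\varphi$, the equality is in fact false: take $h=g_\varphi$ and $X=0$ in \eqref{eq:general.flow}, so that $\frac{\partial}{\partial t}\varphi=6\varphi$ and hence $f_0=2$, $f_1=0$, $f_3=0$; then the first equality of \eqref{eq:metric} (or just the homogeneity you yourself invoked) gives $\frac{\partial}{\partial t}g_\varphi=4g_\varphi$, whereas $2h=2g_\varphi$. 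In other words, under the stated conventions one gets $\frac{\partial}{\partial t}g_\varphi=4h$; the formula $2h$ is correct only if $i_\varphi$ is normalised so that $i_\varphi(g_\varphi)=3\varphi$ (Karigiannis's convention), which conflicts with the identity $i_\varphi(g_\varphi)=6\varphi$ stated earlier in the paper. This factor-of-two tension is admittedly an inconsistency in the source, but your proof cannot read past it: the same scaling test you used to get $2f_0g$ exposes it, so a correct write-up must either renormalise $i_\varphi$ explicitly or replace $2h$ by $4h$, and in any case must actually compute the constant rather than appeal to ``reading off''.
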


In particular, along any flow \eqref{eq:general.flow}, the evolution of the metric is independent of the vector field $X$, and the volume form evolves as follows.

\begin{prop}
Along \eqref{eq:general.flow} we have
\begin{equation}\label{eq:vol.flow}
 \frac{\partial}{\partial t}\vol_\varphi=7f_0\vol_{\varphi}=\frac{1}{3}\frac{\partial}{\partial t}\varphi\wedge*_{\varphi}\varphi.
\end{equation}
\end{prop}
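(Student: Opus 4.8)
The plan is to establish the two claimed equalities separately, using the metric evolution \eqref{eq:metric} for the first and a direct wedge-product computation for the second. Both reduce to the same underlying observation: the only piece of the flow \eqref{eq:general.flow} that can influence the (scalar) volume is the one lying in the trivial $\GG_2$-representation $\Lambda^3_1=\Span\{\varphi\}$, i.e.\ the $f_0$ term, while the $f_1$ and $f_3$ pieces, living in the $\mathbf 7$ and $\mathbf{27}$ summands, must drop out.

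For the first equality I would start from the standard formula for the variation of the Riemannian volume form: writing $\vol_\varphi=\sqrt{\det g_\varphi}$ in local coordinates gives
\begin{equation}
\frac{\partial}{\partial t}\vol_\varphi=\tfrac{1}{2}\tr_{g}\Big(\frac{\partial}{\partial t}g_\varphi\Big)\vol_\varphi.
\end{equation}
By \eqref{eq:metric} we have $\frac{\partial}{\partial t}g_\varphi=2f_0g+\tfrac12 j_\varphi(f_3)=2h$, so it remains to compute the trace. Since $\tr_g g=7$, the first term contributes $14f_0$; the essential point is that $j_\varphi(f_3)$ is trace-free, because $f_3\in\Omega^3_{27}$ and $j_\varphi$ maps $\Lambda^3_{27}$ into the trace-free part $S^2_0T^*M$ (recall $j_\varphi(\varphi)=6g_\varphi$ identifies the trivial summands, while $\Ker j_\varphi=\Lambda^3_7$). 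Hence $\tr_g(\frac{\partial}{\partial t}g_\varphi)=14f_0$, giving $\frac{\partial}{\partial t}\vol_\varphi=7f_0\vol_\varphi$ at once.

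For the second equality I would simply wedge the three summands of $\frac{\partial}{\partial t}\varphi$ in \eqref{eq:general.flow} with $*_\varphi\varphi$. Using $\varphi\wedge*_\varphi\varphi=7\vol_\varphi$, the term $3f_0\varphi$ produces $21f_0\vol_\varphi$. The term $f_3$ contributes nothing, since $f_3\in\Omega^3_{27}$ satisfies $f_3\wedge*_\varphi\varphi=0$ by definition. Finally $*_\varphi(f_1\wedge\varphi)$ lies in $\Omega^3_7$: the map $f_1\mapsto f_1\wedge\varphi$ sends $\Omega^1$ ($\cong\mathbf 7$) into $\Omega^4_7$, and $*_\varphi$ carries $\Omega^4_7$ to $\Omega^3_7$. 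The pairing $\gamma\mapsto\gamma\wedge*_\varphi\varphi$ is a $\GG_2$-equivariant map from $\Omega^3$ to the trivial module $\Omega^7=\R\vol_\varphi$, so it must vanish on the irreducible nontrivial summand $\Omega^3_7\cong\mathbf 7$; thus $*_\varphi(f_1\wedge\varphi)\wedge*_\varphi\varphi=0$. Adding up gives $\frac{\partial}{\partial t}\varphi\wedge*_\varphi\varphi=21f_0\vol_\varphi=3\cdot 7f_0\vol_\varphi$, as required.

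The computation is otherwise routine; the only points needing care — and hence the main obstacle — are the two representation-theoretic vanishings, namely that $j_\varphi(f_3)$ is trace-free and that $\Omega^3_7\wedge*_\varphi\varphi=0$. Both follow from the type decomposition of forms determined by $\varphi$, and together they are exactly the reason only the scalar $f_0$ part of the flow affects the volume. One could alternatively verify $\Omega^3_7\wedge *_\varphi\varphi=0$ directly on the model $\gamma=X\lrcorner*_\varphi\varphi$, but the equivariance argument is cleaner and avoids any explicit calculation.
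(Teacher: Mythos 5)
Your proof is correct. The paper itself offers no proof of this proposition --- it is stated as an immediate consequence of the preceding proposition (the metric evolution \eqref{eq:metric}), with the underlying computations deferred to Bryant and Karigiannis --- and your argument fills that gap along exactly the intended lines: the first equality via $\frac{\partial}{\partial t}\vol_\varphi=\frac{1}{2}\tr_g\big(\frac{\partial}{\partial t}g_\varphi\big)\vol_\varphi$ together with the tracelessness of $j_\varphi(f_3)$ (which follows, as you say, from $i_\varphi$ being a $\GG_2$-equivariant isomorphism matching $S^2_0T^*M$ with $\Lambda^3_{27}$), and the second by wedging \eqref{eq:general.flow} with $*_\varphi\varphi$ term by term. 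One small simplification worth noting: the vanishing of the $\Omega^3_7$ contribution does not need the equivariance/Schur argument, since
\begin{equation}
*_\varphi(f_1\wedge\varphi)\wedge*_\varphi\varphi=\langle f_1\wedge\varphi,*_\varphi\varphi\rangle\vol_\varphi=f_1\wedge\varphi\wedge\varphi=0,
\end{equation}
because $\varphi\wedge\varphi=0$ for any odd-degree form; this is the kind of one-line verification the representation-theoretic argument is standing in for.
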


This formula is useful to study Hitchin's volume functional \cite{Hitchin} on a compact manifold $M$.  (Note that the published version \cite{Hitchin2} of \cite{Hitchin} omits the material on $\GG_2$ structures.)

\begin{prop} Along \eqref{eq:general.flow}, the volume functional
\begin{equation}\label{eq:Vol}
 \Vol(\varphi)=\frac{1}{7}\int_M\varphi\wedge *\varphi=\int_M\vol_{\varphi}=\Vol(M,g_{\varphi}).
\end{equation}
satisfies (by \eqref{eq:vol.flow})
\begin{equation}\label{eq:Vol.flow}
\frac{\partial}{\partial t}\Vol(\varphi)=7\int_Mf_0\vol_{\varphi}=\frac{1}{3}\langle\frac{\partial}{\partial t}\varphi,\varphi\rangle_{L^2}.
\end{equation}
\end{prop}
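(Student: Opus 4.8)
The plan is to recognize that this proposition is essentially the integrated form of the pointwise evolution \eqref{eq:vol.flow}, so the proof reduces to differentiating the volume under the integral sign and then repackaging the same derivative in two equivalent ways. First I would dispose of the defining equalities in \eqref{eq:Vol}: the standard $\GG_2$ identity $\varphi\wedge*_\varphi\varphi=|\varphi|^2_{g_\varphi}\vol_\varphi=7\vol_\varphi$ gives $\frac{1}{7}\int_M\varphi\wedge*_\varphi\varphi=\int_M\vol_\varphi$, and this last expression is by definition $\Vol(M,g_\varphi)$. These are merely recollections and carry no flow content.

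For the evolution, since $M$ is compact and $\varphi(t)$ is smooth in $(x,t)$, I would differentiate under the integral sign,
\[
\frac{\partial}{\partial t}\Vol(\varphi)=\frac{\partial}{\partial t}\int_M\vol_\varphi=\int_M\frac{\partial}{\partial t}\vol_\varphi,
\]
and then feed in \eqref{eq:vol.flow} in its two forms. Substituting the first form, $\frac{\partial}{\partial t}\vol_\varphi=7f_0\vol_\varphi$, immediately yields the first claimed identity $7\int_M f_0\vol_\varphi$. Substituting instead the second form, $\frac{\partial}{\partial t}\vol_\varphi=\frac{1}{3}\frac{\partial}{\partial t}\varphi\wedge*_\varphi\varphi$, gives $\frac{\partial}{\partial t}\Vol(\varphi)=\frac{1}{3}\int_M\frac{\partial}{\partial t}\varphi\wedge*_\varphi\varphi$. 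The one genuine step is then the standard pointwise identity $\alpha\wedge*_\varphi\beta=\langle\alpha,\beta\rangle_{g_\varphi}\vol_\varphi$ for $k$-forms, applied to the pair of $3$-forms $\alpha=\frac{\partial}{\partial t}\varphi$ and $\beta=\varphi$; integrating turns $\int_M\frac{\partial}{\partial t}\varphi\wedge*_\varphi\varphi$ into the $L^2$ pairing, delivering $\frac{1}{3}\langle\frac{\partial}{\partial t}\varphi,\varphi\rangle_{L^2}$.

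There is no real analytic obstacle here, since the entire content has been front-loaded into \eqref{eq:vol.flow}; the only points requiring care are bookkeeping. Chiefly one must note that the Hodge star and metric appearing in the pointwise wedge identity and in the definition of the $L^2$ inner product are both $*_\varphi$ and $g_\varphi$ at the same fixed time $t$, so that no extra terms from the time-dependence of $*_\varphi$ intervene; this is automatic because \eqref{eq:vol.flow} already encodes the total derivative $\frac{\partial}{\partial t}\vol_\varphi$.

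As an internal consistency check that the two right-hand sides of \eqref{eq:Vol.flow} agree, I would compute $\langle\frac{\partial}{\partial t}\varphi,\varphi\rangle_{L^2}$ directly from the type decomposition in \eqref{eq:general.flow}. Since the components $*_\varphi(f_1\wedge\varphi)\in\Omega^3_7$ and $f_3\in\Omega^3_{27}$ are pointwise orthogonal to $\varphi\in\Lambda^3_1$, only the $3f_0\varphi$ term contributes, and using $|\varphi|^2_{g_\varphi}=7$ one finds $\langle\frac{\partial}{\partial t}\varphi,\varphi\rangle_{L^2}=21\int_M f_0\vol_\varphi$, which is exactly three times $7\int_M f_0\vol_\varphi$, confirming both expressions in \eqref{eq:Vol.flow}.
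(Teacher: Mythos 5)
Your proof is correct and is essentially the paper's own argument: the proposition is stated as following ``by \eqref{eq:vol.flow}'', i.e.\ one integrates the pointwise evolution of $\vol_\varphi$ over the compact manifold $M$ and converts $\int_M\frac{\partial}{\partial t}\varphi\wedge*_{\varphi}\varphi$ into the $L^2$ pairing via $\alpha\wedge*_{\varphi}\beta=\langle\alpha,\beta\rangle_{g_\varphi}\vol_\varphi$, exactly as you do. Your closing consistency check using the type decomposition and $|\varphi|^2_{g_\varphi}=7$ is a nice addition but not needed.
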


This shows in particular that $\Vol(\varphi)$ will be monotone along any flow for which $f_0$ has a sign, and the critical points of the functional will be characterised by the $\GG_2$ structures for which $f_0=0$.  It also shows that the obvious gradient flow for $\Vol(\varphi)$ is
\begin{equation}
\frac{\partial}{\partial t}\varphi=\lambda\varphi
\end{equation}
for some $\lambda>0$ (or $\lambda<0$ for the negative gradient flow).  
This is clearly useless since all it does is rescale $\varphi$!  Therefore, if one wants to think about making use of the volume functional for a gradient flow, we should consider restricting the class of $\GG_2$ structures we work with.

\section{Laplacian flow}

The geometric flow of $\GG_2$ structures that has received the most attention is the Laplacian flow due to Bryant \cite{Bryant}.

\begin{dfn} The Laplacian flow is given by
\begin{equation}\label{eq:Lflow}
 \frac{\partial}{\partial t}\varphi=\Delta_{\varphi}\varphi,
\end{equation}
where
\begin{equation}
 \Delta_\varphi=\d\d^*_{\varphi}+\d^*_{\varphi}\d
\end{equation}
is the Hodge Laplacian.  
\end{dfn}

On a compact manifold, we see that 
\begin{equation}
\Delta_{\varphi}\varphi=0\quad\Leftrightarrow\quad \d\varphi=\d^*_{\varphi}\varphi=0
\end{equation}
by integration by parts, and so torsion-free $\GG_2$ structures will be the critical points of \eqref{eq:Lflow}.  

\subsection{Closed $\GG_2$ structures}

Bryant's suggestion is to restrict \eqref{eq:Lflow} to closed $\GG_2$ structures $\varphi$ as in \eqref{eq:closed}.  A key motivation is the usual one in $\GG_2$ geometry: namely that the torsion-free condition naturally splits into a linear condition \eqref{eq:closed} and a nonlinear condition \eqref{eq:coclosed}.  Thus, it is useful to assume the linear condition is satisfied then try to solve the nonlinear one.  This strategy is the only one that has proved to be successful, by the work of Joyce \cite[Chapter 11]{Joyce}.  Hence, it clearly makes sense to follow the same approach in a geometric flow.

It will turn out that when \eqref{eq:Lflow} exists the closed condition is preserved.  In that case
\begin{equation}
 \frac{\partial}{\partial t} \varphi=\d\d^*_{\varphi}\varphi,
\end{equation}
so in fact \eqref{eq:Lflow} stays within a fixed cohomology class $[\varphi(0)]$.  

\begin{prop}\label{prop:Lflow.closed}
If $\varphi(t)$ satisfies \eqref{eq:Lflow} on $M$  and 
$\d\varphi(t)=0$ then $\varphi(t)\in [\varphi(0)]\in H^3(M)$ for all $t$ for which the flow exists.
\end{prop}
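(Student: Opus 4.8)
The plan is to show two things in sequence: first that the closed condition $\d\varphi(t)=0$ is preserved under the flow (which the proposition takes as a hypothesis, so strictly I may assume it), and second that, once closedness holds, the flow stays within the fixed cohomology class $[\varphi(0)]$. Since the statement already assumes $\d\varphi(t)=0$ for all $t$, the real content is the cohomology claim, and this mirrors exactly the argument given earlier in the excerpt for the heat flow on closed forms in \eqref{eq:heat.closed.forms}.

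First I would observe that when $\d\varphi=0$, the Hodge Laplacian simplifies: since $\Delta_\varphi=\d\d^*_\varphi+\d^*_\varphi\d$, the second term $\d^*_\varphi\d\varphi$ vanishes, leaving
\begin{equation}
\frac{\partial}{\partial t}\varphi=\Delta_\varphi\varphi=\d\d^*_\varphi\varphi.
\end{equation}
This is the crucial simplification, entirely analogous to \eqref{eq:heat.closed.forms}. The key point is that the right-hand side $\d\d^*_\varphi\varphi$ is manifestly an exact form: it is $\d$ applied to the $2$-form $\d^*_\varphi\varphi$.

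Next I would exploit this to track the cohomology class. Because $\varphi(t)$ is closed for every $t$, it determines a well-defined de Rham cohomology class $[\varphi(t)]\in H^3(M)$ at each time. To compute how this class evolves, I would differentiate and use that the time derivative equals an exact form:
\begin{equation}
\frac{\partial}{\partial t}[\varphi(t)]=\left[\frac{\partial}{\partial t}\varphi(t)\right]=[\d\d^*_\varphi\varphi]=0,
\end{equation}
where the last equality holds because any exact form represents the zero class in $H^3(M)$. Here I am using that time differentiation commutes with passing to cohomology, which is legitimate since the flow is smooth in $t$ and $M$ is compact. Since the derivative of the cohomology class vanishes for all $t$ in the interval of existence, the class is constant, so $[\varphi(t)]=[\varphi(0)]$ throughout.

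The main subtlety—more a point of care than a genuine obstacle—is justifying that $\frac{\partial}{\partial t}[\varphi(t)]=[\frac{\partial}{\partial t}\varphi(t)]$, i.e.\ that the map sending a smooth family of closed forms to its cohomology class is itself differentiable in $t$ with the expected derivative. This is standard: on a compact manifold one can integrate $\varphi(t)$ against a basis of closed currents (or singular cycles) representing a basis of $H_3(M)$, and the pairing $\int_Z\varphi(t)$ is smooth in $t$ with derivative $\int_Z\frac{\partial}{\partial t}\varphi(t)=\int_Z\d\d^*_\varphi\varphi=0$ by Stokes' theorem, since $Z$ is a cycle. As the periods over a spanning set of cycles are all constant in time, the class $[\varphi(t)]$ is constant, completing the proof. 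The overall argument requires essentially no $\GG_2$-specific input beyond the structural fact that the flow velocity is exact—everything else is Hodge theory applied as in the linear case treated earlier.
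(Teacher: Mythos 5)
Your proof is correct and follows essentially the same route as the paper: the paper's (brief) argument is exactly that closedness kills the $\d^*_\varphi\d\varphi$ term, so $\frac{\partial}{\partial t}\varphi=\d\d^*_\varphi\varphi$ is exact and the flow stays in $[\varphi(0)]$. Your additional care in justifying that exactness of the velocity implies constancy of the class (via periods over cycles, or equivalently writing $\varphi(t)-\varphi(0)$ as $\d$ of a time integral) is a standard point the paper leaves implicit, so there is nothing substantive to add.
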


\noindent This is perhaps reminiscent of the K\"ahler--Ricci flow on a manifold with $c_1=0$, 
which starts with a K\"ahler form and stays within the K\"ahler class, but it is not clear at all whether such an analogy is pertinent or a red herring.

When we restrict to closed $\GG_2$ structures we can decompose 
\begin{equation}\label{eq:Ldecomp}
 \Delta_{\varphi}\varphi=\frac{1}{7}|\tau_2|^2\varphi+f_3.
\end{equation}
We can see this because
\begin{equation}\label{eq:tau2}
\d\!*_\varphi\!\varphi=\tau_2\wedge\varphi=-*_\varphi\!\tau_2
\end{equation}
by \eqref{eq:torsion} and \eqref{eq:omega214}, and therefore
\begin{equation}
 \Delta_{\varphi}\varphi=\d\d^*_{\varphi}\varphi=\d\tau_2
\end{equation}
and differentiating \eqref{eq:tau2} gives
\begin{equation}
 0=\d\tau_2\wedge\varphi=\Delta_{\varphi}\varphi\wedge\varphi=0.
\end{equation}
This means that $f_1=0$ in \eqref{eq:general.flow}.  Moreover, \eqref{eq:tau2} and \eqref{eq:omega214} imply that
\begin{equation}
 \d\tau_2\wedge*_\varphi\varphi=\d(\tau_2\wedge*_\varphi\varphi)-\tau_2\wedge \d*_\varphi\varphi=-\tau_2\wedge\tau_2\wedge\varphi=|\tau_2|^2\vol_{\varphi}.
\end{equation}
Hence, for closed $\GG_2$ structures, \eqref{eq:Ldecomp} implies the following.
\begin{lem}
For a closed $\GG_2$ structure $\varphi$, we have
\begin{equation}
 \Delta_{\varphi}\varphi=0\quad\Leftrightarrow\quad \d^*_\varphi\varphi=0.
\end{equation}
\end{lem}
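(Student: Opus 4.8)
The plan is to reduce both sides of the claimed equivalence to statements about the single surviving torsion form $\tau_2$ and then invoke the pointwise identity derived immediately above the statement. First I would note that closedness, $\d\varphi=0$, kills $\tau_0$, $\tau_1$ and $\tau_3$ in the first equation of \eqref{eq:torsion}, leaving $\tau_2\in\Omega^2_{14}(M)$ as the only torsion; with the sign conventions in force this gives $\d^*_\varphi\varphi=\tau_2$, so that, using $\d\varphi=0$ once more, the Hodge Laplacian collapses to $\Delta_\varphi\varphi=\d\d^*_\varphi\varphi=\d\tau_2$, exactly as recorded above. Consequently the two hypotheses translate as $\d^*_\varphi\varphi=0\Leftrightarrow\tau_2=0$ and $\Delta_\varphi\varphi=0\Leftrightarrow\d\tau_2=0$, and the lemma is precisely the assertion that, for this particular $2$-form, being closed is equivalent to vanishing.

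The implication $\d^*_\varphi\varphi=0\Rightarrow\Delta_\varphi\varphi=0$ is immediate: writing $\Delta_\varphi\varphi=\d\d^*_\varphi\varphi+\d^*_\varphi\d\varphi$, both summands vanish once $\d\varphi=0$ and $\d^*_\varphi\varphi=0$. The content is therefore the reverse direction, and here I would use the pointwise $7$-form identity $\d\tau_2\wedge*_\varphi\varphi=|\tau_2|^2\vol_\varphi$ established just before the statement. Assuming $\Delta_\varphi\varphi=0$ gives $\d\tau_2=0$, so the left-hand side of this identity vanishes identically; hence $|\tau_2|^2=0$ at every point, forcing $\tau_2=0$, i.e. $\d^*_\varphi\varphi=0$. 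The key conceptual point is that this identity is exactly what upgrades the \emph{differential} condition $\d\tau_2=0$ to the \emph{algebraic} condition $\tau_2=0$.

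I expect no real obstacle, since the only substantive computation---the identity itself---is already in hand; its derivation rests on the three facts that $\tau_2\wedge*_\varphi\varphi=0$ because $\tau_2\in\Omega^2_{14}(M)$ by \eqref{eq:omega214}, that $\d*_\varphi\varphi=\tau_2\wedge\varphi$ by \eqref{eq:tau2}, and that $\tau_2\wedge\varphi=-*_\varphi\tau_2$ again from \eqref{eq:omega214}, after which differentiating $\tau_2\wedge*_\varphi\varphi\equiv0$ yields $\d\tau_2\wedge*_\varphi\varphi=-\tau_2\wedge\tau_2\wedge\varphi=\tau_2\wedge*_\varphi\tau_2=|\tau_2|^2\vol_\varphi$. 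Because the whole argument is pointwise, it requires no integration by parts and, incidentally, no compactness of $M$; the one thing to double-check is simply that the sign conventions indeed give $\d^*_\varphi\varphi=\tau_2$ rather than $-\tau_2$, which in any case does not affect the final conclusion.
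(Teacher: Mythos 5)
Your proof is correct and follows essentially the same route as the paper: the paper's argument is precisely that the identity $\d\tau_2\wedge*_\varphi\varphi=|\tau_2|^2\vol_\varphi$ forces the $\Omega^3_1$-component of $\Delta_\varphi\varphi=\d\tau_2$ to be $\tfrac{1}{7}|\tau_2|^2\varphi$ (this is \eqref{eq:Ldecomp}), so $\Delta_\varphi\varphi=0$ pointwise implies $\tau_2=0$, with the converse being immediate. Your observation that the argument is purely pointwise and needs no compactness is exactly the point the paper makes right after the lemma.
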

Therefore, the critical points of \eqref{eq:Lflow} on closed $\GG_2$ structures are precisely the torsion-free $\GG_2$ structures, without assuming compactness.  This might seem like a minor point (since we will mainly only care about the Laplacian flow on compact manifolds) but it appears to hint at the special character of the Laplacian flow because it is restricted to closed $\GG_2$ structures.

Moreover, we can show that 
\begin{equation}\label{eq:Delta.metric}
\Delta_{\varphi}\varphi=i_{\varphi}\Big(-\Ric(g_{\varphi})+\frac{4}{21}|\tau_2|^2g_{\varphi}+\frac{1}{8}j_{\varphi}\big(*_{\varphi}(\tau_2\wedge\tau_2)\big)\Big)
\end{equation}
so that by \eqref{eq:metric} we have the following.

\begin{prop} Along \eqref{eq:Lflow} we have
\begin{equation}\label{eq:Lflow.metric}
\frac{\partial}{\partial t}g_{\varphi}=-2\Ric(g_{\varphi})+\frac{8}{21}|\tau_2|^2g_{\varphi}+\frac{1}{4}j_{\varphi}\big(*_{\varphi}(\tau_2\wedge\tau_2)\big).
\end{equation}
\end{prop}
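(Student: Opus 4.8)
The plan is to recognize the Laplacian flow as a special instance of the general flow \eqref{eq:general.flow} and then invoke the already-established evolution equation \eqref{eq:metric} for the induced metric. The essential input is the identity \eqref{eq:Delta.metric}, which I take as given: it writes $\Delta_\varphi\varphi$ in the image of $i_\varphi$, namely $\Delta_\varphi\varphi = i_\varphi(h)$ with the symmetric $2$-tensor
$$h = -\Ric(g_\varphi) + \tfrac{4}{21}|\tau_2|^2 g_\varphi + \tfrac{1}{8}j_\varphi\big(*_\varphi(\tau_2\wedge\tau_2)\big).$$
The absence of any $X\lrcorner *_\varphi\varphi$ term here is consistent with the earlier observation that $f_1=0$ along the flow restricted to closed $\GG_2$ structures: the $\Lambda^3_7$-component of $\Delta_\varphi\varphi$ vanishes, so the entire velocity lies in $i_\varphi(S^2T^*M)=\Lambda^3_1\oplus\Lambda^3_{27}$.

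First I would match \eqref{eq:Lflow} against the template \eqref{eq:general.flow}: since $\frac{\partial}{\partial t}\varphi = \Delta_\varphi\varphi = i_\varphi(h)$, the tensor $h$ appearing in \eqref{eq:general.flow} is exactly the one read off from \eqref{eq:Delta.metric}. Then \eqref{eq:metric}, which asserts $\frac{\partial}{\partial t}g_\varphi = 2h$ independently of the vector field $X$, applies directly. Substituting the explicit $h$ and distributing the factor of $2$ yields
$$\frac{\partial}{\partial t}g_\varphi = -2\Ric(g_\varphi) + \tfrac{8}{21}|\tau_2|^2 g_\varphi + \tfrac{1}{4}j_\varphi\big(*_\varphi(\tau_2\wedge\tau_2)\big),$$
which is precisely \eqref{eq:Lflow.metric}.

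Given \eqref{eq:Delta.metric}, therefore, there is essentially no obstacle: the proposition is a one-line consequence of combining \eqref{eq:Delta.metric} with \eqref{eq:metric}. The genuine difficulty is hidden inside \eqref{eq:Delta.metric} itself, and if I had to establish that from scratch the hard part would be computing the $S^2T^*M$-component of $\Delta_\varphi\varphi = \d\tau_2$ via $j_\varphi$ and recognizing the resulting combination of second derivatives of the torsion as $-\Ric(g_\varphi)$ up to quadratic terms in $\tau_2$. This amounts to a Weitzenb\"ock-type identity expressing the Ricci curvature of a closed $\GG_2$ structure purely through $\tau_2$ and its derivative; that is the computationally involved step, whereas the passage from \eqref{eq:Delta.metric} to \eqref{eq:Lflow.metric} recorded in this proposition is pure bookkeeping.
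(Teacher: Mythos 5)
Your proposal is correct and follows exactly the paper's own route: the paper likewise treats \eqref{eq:Delta.metric} as the given input, reads off the tensor $h$ in \eqref{eq:general.flow}, and applies \eqref{eq:metric} (i.e.~$\frac{\partial}{\partial t}g_{\varphi}=2h$) to double the coefficients and obtain \eqref{eq:Lflow.metric}. Your closing remark is also accurate: the paper offers no further argument here, so all the substance is indeed contained in the identity \eqref{eq:Delta.metric} itself, which both you and the paper take as established elsewhere.
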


\subsection{Volume functional}

\begin{lem}\label{lem.Vol}
Along \eqref{eq:Lflow} for closed $\GG_2$ structures, the volume functional $\Vol(\varphi)$ is monotonically increasing.
\end{lem}

\begin{proof}
 We can already see from \eqref{eq:Ldecomp} that, in terms of \eqref{eq:general.flow}, 
\begin{equation}\label{eq:f0.Lflow}
f_0=\frac{1}{7}|\tau_2|^2\geq 0.
\end{equation}
Hence, by \eqref{eq:Vol.flow}, the volume functional $\Vol(\varphi)$ is monotonically increasing along the Laplacian flow.  
\end{proof}

 This allows us to give a very quick alternative proof of the main result of \cite{Lin}.

\begin{prop}\label{prop:no.solitons}
Let $\varphi(t)$ be a steady or shrinking soliton to the Laplacian flow \eqref{eq:Lflow} for closed $\GG_2$ structures on a compact manifold.  Then $\varphi(t)$ is stationary, i.e.~$\varphi(t)=\varphi(0)$ is torsion-free for all $t$.
\end{prop}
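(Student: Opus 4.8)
The plan is to pit the volume monotonicity of Lemma \ref{lem.Vol} against the rigid, prescribed way in which the total volume must evolve along a self-similar solution. The starting point is that, combining \eqref{eq:f0.Lflow} with \eqref{eq:Vol.flow}, along the Laplacian flow for closed $\GG_2$ structures we have
\[
\frac{\partial}{\partial t}\Vol(\varphi) = 7\int_M f_0\,\vol_\varphi = \int_M|\tau_2|^2\,\vol_\varphi \geq 0,
\]
with equality at a given time precisely when $\tau_2\equiv 0$, which by the Lemma is equivalent to $\Delta_\varphi\varphi=0$, i.e.\ to $\varphi$ being torsion-free. Hence it is enough to show that along a steady or shrinking soliton the volume $\Vol(\varphi(t))$ is non-increasing: the displayed inequality then forces $\frac{\partial}{\partial t}\Vol\equiv 0$, so $\tau_2\equiv 0$ for all $t$, whence $\frac{\partial}{\partial t}\varphi = \Delta_\varphi\varphi = 0$ and $\varphi(t)=\varphi(0)$ is torsion-free.

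First I would set up the self-similar ansatz. Write a soliton as $\varphi(t)=\rho(t)^{3/2}\,\Phi_t^*\varphi(0)$, where $\Phi_t$ is a one-parameter family of diffeomorphisms of $M$ with $\Phi_0=\id$ and $\rho(t)>0$ is the scaling factor with $\rho(0)=1$; a steady soliton corresponds to $\rho\equiv 1$ (motion by diffeomorphisms alone), while a shrinking soliton corresponds to $\rho$ non-increasing, $\rho'\leq 0$. Using the standard scaling behaviour of a $\GG_2$ structure, namely that scaling $\varphi$ by $c^3$ scales $g_\varphi$ by $c^2$ and $\vol_\varphi$ by $c^7$, together with the diffeomorphism invariance of the integral over the compact manifold $M$, I then compute
\[
\Vol(\varphi(t)) = \int_M\vol_{\varphi(t)} = \rho(t)^{7/2}\int_M\Phi_t^*\vol_{\varphi(0)} = \rho(t)^{7/2}\,\Vol(\varphi(0)).
\]
Here the role of compactness is exactly to guarantee $\int_M\Phi_t^*\vol_{\varphi(0)}=\int_M\vol_{\varphi(0)}$, so that all change in the total volume is carried by the scaling $\rho$.

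The two cases now close simultaneously. Differentiating the previous display gives $\frac{\partial}{\partial t}\Vol(\varphi) = \tfrac{7}{2}\rho^{5/2}\rho'\,\Vol(\varphi(0))$, and since $\rho>0$ and $\Vol(\varphi(0))>0$ this has the sign of $\rho'$. For a steady soliton $\rho'\equiv 0$, while for a shrinking soliton $\rho'\leq 0$; in either case $\frac{\partial}{\partial t}\Vol\leq 0$. Comparing with the monotonicity inequality $\frac{\partial}{\partial t}\Vol\geq 0$ forces $\rho'\equiv 0$ and $\frac{\partial}{\partial t}\Vol\equiv 0$, so by the first paragraph $\tau_2\equiv 0$ and $\varphi(t)=\varphi(0)$ is torsion-free, hence stationary. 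The main obstacle is purely one of conventions: pinning down the exponent $7/2$ relating the 3-form scaling factor to the volume, and checking that the normalization in the self-similar ansatz is the one genuinely compatible with $\frac{\partial}{\partial t}\varphi=\Delta_\varphi\varphi$, so that the induced ODE for $\rho(t)$ really does satisfy $\rho'\leq 0$ for a shrinker. Once volume monotonicity is in hand, everything else is immediate, which is why Lemma \ref{lem.Vol} does essentially all the work.
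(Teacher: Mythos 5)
Your proof is correct and follows essentially the same route as the paper: pitting the monotonicity $\frac{\partial}{\partial t}\Vol(\varphi)=\int_M|\tau_2|^2\vol_\varphi\geq 0$ from Lemma \ref{lem.Vol} against the fact that a steady or shrinking soliton has non-increasing volume, forcing $\tau_2\equiv 0$. The only difference is that the paper simply asserts $\frac{\partial}{\partial t}\Vol(\varphi(t))\leq 0$ for such solitons, whereas you derive it explicitly from the self-similar ansatz and the scaling/diffeomorphism invariance of the volume --- a worthwhile detail to make precise, and your exponents are right.
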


\begin{proof}
If $\varphi(t)$ is steady or shrinking then 
\begin{equation}
\frac{\partial}{\partial t}\Vol(\varphi(t))\leq 0.
\end{equation}
Therefore, by Lemma \ref{lem.Vol}, $\Vol(\varphi(t))$ is constant.  Hence, by \eqref{eq:Vol.flow} and \eqref{eq:f0.Lflow}, we must have that 
 \begin{equation}
 f_0=\frac{1}{7}|\tau_2|^2=0,
 \end{equation}
 which means $\tau_2=0$, as required.
\end{proof}
\noindent It is important to note that this result fails in the non-compact setting: Lauret \cite{Lauret2} has constructed examples of non-compact shrinking and steady solitons which are not stationary.  The proof of Proposition \ref{prop:no.solitons} is not valid here since the volume is not well-defined for these non-compact examples.

We now show that the Laplacian flow for closed $\GG_2$ structures is actually the gradient flow for $\Vol(\varphi)$ where, from now on, we only consider the volume functional restricted to a given cohomology class.

\begin{prop}\label{prop:Lflow.gradientflow}  The Laplacian flow \eqref{eq:Lflow} for closed $\GG_2$ structures $\varphi$ is the gradient flow for the volume functional $\Vol(\varphi)$ in \eqref{eq:Vol} restricted to $[\varphi]$.
\end{prop}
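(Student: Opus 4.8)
The plan is to restrict $\Vol$ to the closed $\GG_2$ structures lying in the fixed class $[\varphi]$ and to read off its gradient directly from the first-variation formula \eqref{eq:Vol.flow}. Since the flow preserves closedness (Proposition~\ref{prop:Lflow.closed}), any variation $\tfrac{\partial}{\partial t}\varphi$ tangent to $[\varphi]$ is exact, so I would write it as $\tfrac{\partial}{\partial t}\varphi=\d\eta$ for a $2$-form ``potential'' $\eta\in\Omega^2(M)$. By \eqref{eq:Vol.flow},
$$\frac{\partial}{\partial t}\Vol(\varphi)=\tfrac13\langle\tfrac{\partial}{\partial t}\varphi,\varphi\rangle_{L^2}=\tfrac13\langle\d\eta,\varphi\rangle_{L^2}.$$

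The next step is to integrate by parts. As $M$ is compact with $\varphi$ closed, the adjoint relation gives $\langle\d\eta,\varphi\rangle_{L^2}=\langle\eta,\d^*_\varphi\varphi\rangle_{L^2}$, and the computation leading to \eqref{eq:Ldecomp} (where $\Delta_\varphi\varphi=\d\d^*_\varphi\varphi=\d\tau_2$) shows that $\d^*_\varphi\varphi=\tau_2$ for a closed $\GG_2$ structure. Hence
$$\frac{\partial}{\partial t}\Vol(\varphi)=\tfrac13\langle\eta,\tau_2\rangle_{L^2}=\langle\tfrac13\d^*_\varphi\varphi,\eta\rangle_{L^2},$$
which exhibits the first variation as the $L^2$ pairing of the potential $\eta$ with the fixed $2$-form $\tfrac13\d^*_\varphi\varphi$. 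Thus the $L^2$-gradient in the potential variable is $\tfrac13\d^*_\varphi\varphi$, and pushing it forward under $\eta\mapsto\d\eta$ produces the evolution $\tfrac{\partial}{\partial t}\varphi=\d\bigl(\tfrac13\d^*_\varphi\varphi\bigr)=\tfrac13\Delta_\varphi\varphi$; this is the Laplacian flow up to the harmless positive constant $\tfrac13$ (a rescaling of time), and its rate of increase of volume is consistent with Lemma~\ref{lem.Vol}.

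The step I expect to require the most care is the interpretation of the word \emph{gradient}, since a potential $\eta$ is determined by $\tfrac{\partial}{\partial t}\varphi$ only up to a closed $2$-form. I would handle this by observing that the resulting flow $\tfrac{\partial}{\partial t}\varphi=\tfrac13\d\d^*_\varphi\varphi$ is manifestly independent of the chosen potential, so the ambiguity is immaterial; alternatively one fixes the gauge by taking $\eta$ coexact, where $\d$ is injective, so that the $L^2$ metric on potentials descends to a genuine metric on the tangent space $\d\Omega^2(M)$ of $[\varphi]$. It is precisely this passage through potentials that must be used: taking instead the naive $L^2$-gradient of $\Vol|_{[\varphi]}$ among exact $3$-forms would return the $L^2$-projection of $\tfrac13\varphi$ onto exact forms, namely $\tfrac13(\varphi-\mathcal{H}\varphi)$ with $\mathcal{H}\varphi$ the harmonic part, which is \emph{not} $\Delta_\varphi\varphi$. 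Finally, as a consistency check I would note that the critical points of $\Vol|_{[\varphi]}$ occur exactly when $\tau_2=\d^*_\varphi\varphi=0$, that is, at the torsion-free structures, matching the stationary points of the flow.
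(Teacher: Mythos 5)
Your proof is correct and follows essentially the same route as the paper: write variations within $[\varphi]$ through a $2$-form potential $\eta$, compute the first variation using \eqref{eq:Vol.flow}, integrate by parts to obtain $\tfrac{1}{3}\langle \tfrac{\partial}{\partial t}\eta,\d^*_{\varphi}\varphi\rangle_{L^2}$, and read off the gradient flow in the potential variable, recovering $\tfrac{\partial}{\partial t}\varphi=\d\d^*_{\varphi}\varphi=\Delta_{\varphi}\varphi$ up to a rescaling of time. Your additional remarks --- that the ambiguity in the potential is immaterial, and that the gradient is taken with respect to the $L^2$ metric on potentials rather than the $L^2$ metric on exact $3$-forms (where one would instead get the exact part of $\tfrac{1}{3}\varphi$) --- make explicit a subtlety that the paper only gestures at with the phrase ``the Laplacian flow is really, in some sense, a flow on 2-forms''.
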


\begin{proof}
We know that the flow stays within a given cohomology class, so we can write
\begin{equation}
 \varphi(t)=\varphi(0)+\d\eta(t)
\end{equation}
for some 2-forms $\eta$ and the Laplacian flow is really, in some sense, a flow on 2-forms.  (Again, this is reminiscent of K\"ahler--Ricci flow with $c_1=0$ as the flow becomes a flow on K\"ahler potentials, but this analogy is made with the usual caveats.) Then \eqref{eq:Vol.flow} gives us that
\begin{equation}
\frac{\partial}{\partial t}\Vol(\varphi)=\frac{1}{3}\langle \frac{\partial}{\partial t}\varphi,\varphi\rangle_{L^2}
=\frac{1}{3}\langle \d\frac{\partial}{\partial t}\eta,\varphi\rangle_{L^2}
=\frac{1}{3}\langle \frac{\partial}{\partial t}\eta,\d^*_{\varphi}\varphi\rangle_{L^2}.
\end{equation}  
Hence, the gradient flow for $\Vol(\varphi)$ is 
\begin{equation}
\frac{\partial}{\partial t}\eta =\d^*_{\varphi}\varphi \quad\Rightarrow\quad
\frac{\partial}{\partial t}\varphi=\d\frac{\partial}{\partial t}\eta=\d\d^*_{\varphi}\varphi=\Delta_{\varphi}\varphi,
\end{equation}
the Laplacian flow.  (We have ignored the factor $\frac{1}{3}$ which amounts to rescaling time $t$.)
\end{proof}

Proposition \ref{prop:Lflow.gradientflow} immediately yields the following.

\begin{prop}
A closed $\GG_2$ structure on a compact manifold is a critical point of $\Vol(\varphi)$ within a fixed cohomology class if and only if $\varphi$  is torsion-free.
\end{prop}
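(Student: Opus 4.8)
The plan is to prove both directions of the equivalence using the gradient-flow interpretation established in Proposition \ref{prop:Lflow.gradientflow} together with the already-established characterization of critical points of the Laplacian flow. First I would recall that, by definition, $\varphi$ is a critical point of $\Vol(\varphi)$ within the fixed cohomology class $[\varphi]$ precisely when the gradient of $\Vol$ (restricted to $[\varphi]$) vanishes at $\varphi$. By Proposition \ref{prop:Lflow.gradientflow}, this gradient is $\Delta_\varphi\varphi$ (up to the harmless factor $\tfrac{1}{3}$ absorbed into the time rescaling), so $\varphi$ is a critical point of $\Vol$ in $[\varphi]$ if and only if $\Delta_\varphi\varphi=0$.

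The second step is to invoke the Lemma immediately preceding \eqref{eq:Delta.metric}, which shows that for a \emph{closed} $\GG_2$ structure one has the equivalence
\begin{equation*}
\Delta_\varphi\varphi=0\quad\Leftrightarrow\quad \d^*_\varphi\varphi=0.
\end{equation*}
Since $\varphi$ is closed by hypothesis, combining this with the computation $\Delta_\varphi\varphi=\d\tau_2$ and the decomposition \eqref{eq:Ldecomp} shows that the vanishing of the Laplacian forces $\tau_2=0$, hence $\d^*_\varphi\varphi=0$. Together with $\d\varphi=0$ this is exactly the torsion-free condition \eqref{eq:torsion.free}. Chaining the two equivalences gives: $\varphi$ is critical for $\Vol$ in $[\varphi]$ $\Leftrightarrow$ $\Delta_\varphi\varphi=0$ $\Leftrightarrow$ $\d^*_\varphi\varphi=0$ $\Leftrightarrow$ $\varphi$ is torsion-free.

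I expect no genuine obstacle here, since this proposition is essentially a corollary that packages together results already proved in the excerpt: the gradient-flow identification and the closed-structure criticality lemma. The only point requiring a little care is the logical role of the \emph{cohomology class}: one must note that the variations $\tfrac{\partial}{\partial t}\varphi$ allowed when computing the restricted gradient are exactly the exact $3$-forms $\d\eta$, which is why the relevant critical-point condition pairs $\d^*_\varphi\varphi$ against $\tfrac{\partial}{\partial t}\eta$ rather than against an arbitrary variation of $\varphi$. This confirms that the gradient vanishes if and only if $\d^*_\varphi\varphi$ is $L^2$-orthogonal to all such variations, which—because $\d^*_\varphi\varphi$ itself determines the flow direction—is equivalent to $\d^*_\varphi\varphi=0$. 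Compactness is used implicitly to guarantee that integration by parts in Proposition \ref{prop:Lflow.gradientflow} has no boundary terms and that $\Vol(\varphi)$ is well-defined and finite.
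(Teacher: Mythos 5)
Your proposal is correct and follows essentially the same route as the paper, which derives this proposition as an immediate consequence of Proposition \ref{prop:Lflow.gradientflow} combined with the lemma that $\Delta_\varphi\varphi=0\Leftrightarrow\d^*_\varphi\varphi=0$ for closed $\GG_2$ structures. Your closing remark---that criticality pairs $\d^*_\varphi\varphi$ against arbitrary variations $\tfrac{\partial}{\partial t}\eta$, so the gradient vanishes iff $\d^*_\varphi\varphi=0$---is exactly the point that makes the paper's ``immediately yields'' rigorous.
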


 We can even say more about the critical points of the volume functional. 
 If we look at the second derivative at a critical point $\varphi(0)$, then by 
\eqref{eq:Vol.flow} along any variation $\varphi(s)=\varphi(0)+\d\eta(s)$ in the cohomology class
\begin{equation}\label{eq:second.var}
\frac{\partial^2}{\partial s^2}\Vol(\varphi)|_{s=0}=\frac{1}{3}\int_M\frac{\partial}{\partial s}\varphi\wedge \frac{\partial}{\partial s}*_{\varphi}\!\varphi|_{s=0},
\end{equation}
since
\begin{equation}
\frac{1}{3}\int_M\frac{\partial^2}{\partial s^2}\varphi\wedge *_{\varphi}\varphi|_{s=0}=\frac{1}{3}\langle \d\frac{\partial^2}{\partial s^2}\eta,*_{\varphi}\varphi\rangle_{L^2}|_{s=0}=\frac{1}{3}\langle\frac{\partial^2}{\partial s^2}\eta,\d^*_{\varphi}\varphi\rangle_{L^2}|_{s=0}=0
\end{equation}
as $\d^*_{\varphi}\varphi=0$ at $s=0$ by assumption.  Now if we write
the variation of $\varphi(s)$ at $s=0$ as in formula \eqref{eq:general.flow} then 
$*_{\varphi}\varphi(s)$ varies by \eqref{eq:starphi} and so we see that
\begin{equation}\label{eq:second.var2}
\int_M\frac{\partial}{\partial s}\varphi\wedge \frac{\partial}{\partial s}*_{\varphi}\!\varphi|_{s=0}=c_0\|f_0\|^2_{L^2}+c_1\|f_1\|^2_{L^2}-\|f_3\|^2_{L^2}
\end{equation}
for some positive constants $c_0$, $c_1$.  If the variation $\varphi(s)$ is orthogonal to the action by diffeomorphisms (meaning that $\frac{\partial\varphi}{\partial s}|_{s=0}$ and the tangent to the diffeomorphism orbit through $\varphi(0)$ are orthogonal), then a slice theorem argument 
forces $f_0=f_1=0$ (see \cite{Hitchin}).  In other words, the slice condition
\begin{equation}
\d^*_{\varphi}\d\eta\in\Omega^2_{14}(M)\quad\Rightarrow\quad \d\eta\in\Omega^3_{27}(M),
\end{equation}
so $f_3=\d\eta(0)$.
Putting this observation together with \eqref{eq:second.var} and \eqref{eq:second.var2}, we see that, orthogonal to the action by diffeomorphisms, we have
\begin{equation}
\frac{\partial^2}{\partial s^2}\Vol(\varphi)|_{s=0}=-\|\d\eta(0)\|_{L^2}^2\leq 0.
\end{equation}

Thus, we have the following.

\begin{thm}\label{thm:Vol.max.closed} Critical points of $\Vol(\varphi)$ on $[\varphi]$ are strict local maxima (modulo the action of diffeomorphisms).  
\end{thm}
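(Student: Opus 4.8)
The plan is to analyse the second variation of $\Vol(\varphi)$ at a critical point and show it is negative-definite transverse to the diffeomorphism orbit. Since we work inside the fixed cohomology class, every admissible variation is exact, so I would write $\varphi(s)=\varphi(0)+\d\eta(s)$ with $\varphi(0)$ the given critical point, which by the preceding proposition is torsion-free, so that $\d^*_\varphi\varphi=0$ and $\d\varphi(0)=0$. The first step is to differentiate \eqref{eq:Vol.flow} once more in $s$ and observe that the term involving $\tfrac{\partial^2}{\partial s^2}\varphi$ pairs against $\d^*_\varphi\varphi=0$ and hence drops out; this reduces the Hessian to the symmetric expression \eqref{eq:second.var}, namely $\tfrac13\int_M \partial_s\varphi\wedge\partial_s{*_\varphi\varphi}$ evaluated at $s=0$.

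The heart of the argument is then to compute this pairing explicitly. I would decompose the first variation $\partial_s\varphi|_{s=0}$ according to the type decomposition \eqref{eq:general.flow} into its $f_0$, $f_1$ and $f_3$ components, and use \eqref{eq:starphi} to write the matching variation of $*_\varphi\varphi$. Wedging the two expressions and integrating, all cross-terms between distinct types vanish by orthogonality of the decomposition $\Lambda^3=\Lambda^3_1\oplus\Lambda^3_7\oplus\Lambda^3_{27}$ under the wedge pairing with $\Lambda^4$, leaving only the three diagonal contributions. The crucial feature is the sign: the $\Lambda^3_1$ and $\Lambda^3_7$ diagonal terms produce $c_0\|f_0\|^2_{L^2}+c_1\|f_1\|^2_{L^2}$ with $c_0,c_1>0$, whereas the $\Lambda^3_{27}$ term contributes $-\|f_3\|^2_{L^2}$, the minus sign arising from the $-*_\varphi f_3$ appearing in \eqref{eq:starphi} together with $f_3\wedge{*_\varphi}f_3=|f_3|^2\vol_\varphi$. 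This establishes \eqref{eq:second.var2}.

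At this point the Hessian is indefinite, and the remaining task—which I expect to be the main obstacle—is to eliminate the positive terms $c_0\|f_0\|^2+c_1\|f_1\|^2$. The functional $\Vol$ is invariant under the action of diffeomorphisms, whose orbit through $\varphi(0)$ has tangent vectors $\mathcal L_X\varphi=\d(X\lrcorner\varphi)$ (using $\d\varphi=0$), so these positive directions should be absorbed into the orbit. I would make this precise via a slice theorem (as in Hitchin \cite{Hitchin}): restricting to variations $\d\eta$ transverse to the orbit gives the slice condition that $\d^*_\varphi\d\eta\in\Omega^2_{14}(M)$, which in turn forces $\d\eta\in\Omega^3_{27}(M)$, i.e.\ $f_0=f_1=0$ and $f_3=\d\eta(0)$. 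Substituting into \eqref{eq:second.var2} collapses the Hessian to $-\|\d\eta(0)\|^2_{L^2}\le 0$, strictly negative unless $\d\eta(0)=0$, so the critical point is a strict local maximum modulo diffeomorphisms.

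The main difficulty is the slice step: one must verify both that the $f_0$ and $f_1$ parts of an exact variation are genuinely accounted for by the diffeomorphism directions and that the transverse slice consists only of $\Omega^3_{27}$-valued exact forms. The sign bookkeeping in \eqref{eq:second.var2} is routine once the orthogonality of the type decomposition under the wedge pairing is invoked, but securing $c_0,c_1>0$ together with the strict negativity of the $f_3$-term is precisely what makes the local maximum strict.
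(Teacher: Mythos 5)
Your proposal is correct and follows essentially the same route as the paper: the second variation computed via \eqref{eq:Vol.flow} with the $\partial_s^2$-term killed by $\d^*_\varphi\varphi=0$, the type decomposition yielding $c_0\|f_0\|^2_{L^2}+c_1\|f_1\|^2_{L^2}-\|f_3\|^2_{L^2}$, and the slice condition $\d^*_\varphi\d\eta\in\Omega^2_{14}(M)\Rightarrow\d\eta\in\Omega^3_{27}(M)$ forcing $f_0=f_1=0$, giving $-\|\d\eta(0)\|^2_{L^2}$ transverse to the diffeomorphism orbit. The points you flag as the main difficulties (the slice argument and the sign bookkeeping) are exactly the steps the paper handles by citing Hitchin and by the explicit wedge computation, respectively.
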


This suggests that the gradient flow of the volume functional (i.e.~\eqref{eq:Lflow}) could be well-behaved since its only critical points are maxima.

\subsection{Short-time existence}

A key issue we have avoided in our discussion thus far is the question of whether the Laplacian flow exists or not.  Certainly, if we look at 
\eqref{eq:Lflow} and compare it to \eqref{eq:heat} we would seem to have the wrong sign!  In general, \eqref{eq:Lflow} does not seem to be parabolic in any sense, which is very bad news analytically.

However, again the fact that we are restricting to closed $\GG_2$ structures comes to our rescue.  In this case, we have already seen in \eqref{eq:Lflow.metric} that 
the metric evolves by Ricci flow plus lower order terms and so its flow is parabolic modulo diffeomorphisms.  

If we do DeTurck's trick for the Laplacian flow, using $\d\varphi=0$:
\begin{equation}\label{eq:Lflow.DT}
\frac{\partial}{\partial t}\varphi=\Delta_{\varphi}\varphi+\mathcal{L}_{X(\varphi)}\varphi=\Delta_{\varphi}\varphi+\d(X(\varphi)\lrcorner\varphi),
\end{equation}
 (as in the Ricci flow case, and with the same vector field $X$ given in \eqref{eq:DT.vfield}, in fact) then we might hope that we end up with a genuine parabolic equation in 
 \eqref{eq:Lflow.DT}.  However, this is not the case!
 
 In fact, \eqref{eq:Lflow.DT} is only parabolic in the direction of closed forms, so one has to consider the restricted flow in order to prove short-time existence.  This is a little bit tricky but was done by Bryant--Xu \cite{BryantXu}.  Their paper, which definitely gives a correct result that is fundamental to the subject, has never been published, so we give an account of the proof here, which is essentially the same as in \cite{BryantXu}.
 
\begin{thm}\label{thm:Lflow.short}
 Let $\varphi_0$ be a smooth closed $\GG_2$ structure on a compact manifold $M$.  There exists $\epsilon>0$ so that a unique solution $\varphi(t)$ to the Laplacian flow \eqref{eq:Lflow} with $\varphi(0)=\varphi_0$ and $\d\varphi(t)=0$ exists for all $t\in [0,\epsilon_0]$, where $\epsilon_0$ depends on $\varphi_0$.
\end{thm}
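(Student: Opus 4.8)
The plan is to kill the diffeomorphism invariance of \eqref{eq:Lflow} via DeTurck's trick, exactly as in the Ricci flow case (compare Proposition \ref{prop.RicciDT}), thereby replacing \eqref{eq:Lflow} by the modified flow \eqref{eq:Lflow.DT} with the vector field $X(\varphi)$ of \eqref{eq:DT.vfield}, and then to establish short-time existence and uniqueness for \eqref{eq:Lflow.DT} by parabolic theory. The first step is to linearise the right-hand side of \eqref{eq:Lflow.DT} at a closed $\GG_2$ structure $\varphi$ and extract its principal symbol $\sigma_\xi$, a symmetric endomorphism of $\Lambda^3 T_x^*M$ for each $\xi\in T_x^*M$. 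Since \eqref{eq:Lflow.metric} shows that the induced metric flows by $-2\Ric(g_\varphi)$ plus lower-order terms, one expects the symbol to have the right sign for parabolicity in the metric directions; the crucial point, however, is that $\sigma_\xi$ is \emph{not} definite, so \eqref{eq:Lflow.DT} is not strictly parabolic as an equation on all $3$-forms.

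The heart of the argument is to pin down this degeneracy precisely. I would compute that $\sigma_\xi$ has the correct sign for parabolicity when restricted to the subspace of closed symbol directions $\{\chi\in\Lambda^3 T_x^*M : \xi\wedge\chi=0\}$, while it possesses a nontrivial kernel transverse to this subspace. In other words, \eqref{eq:Lflow.DT} is parabolic precisely in the direction of closed forms. This matches what we already know: the condition $\d\varphi=0$ is preserved by the flow, and a closed solution stays within the fixed cohomology class $[\varphi_0]$ by Proposition \ref{prop:Lflow.closed}, so the genuine dynamics takes place entirely among closed forms and the degenerate transverse directions are spurious.

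To turn this into an honest parabolic problem, I would restrict the modified flow to closed $3$-forms, writing $\varphi(t)=\varphi_0+\d\eta(t)$ and treating the evolution as a flow on the potential $\eta$. After a suitable gauge-fixing to remove the freedom of adding closed forms to $\eta$, the restricted operator becomes strictly parabolic, and standard parabolic theory on the compact manifold $M$ yields a unique short-time solution $\tilde\varphi(t)$ of \eqref{eq:Lflow.DT} with $\d\tilde\varphi(t)=0$. One then solves the ODE \eqref{eq:DT} for the family of diffeomorphisms $\Phi(t)$ generated by $-X(\tilde\varphi)$ and sets $\varphi=\Phi^*\tilde\varphi$, which satisfies \eqref{eq:Lflow} and remains closed, exactly mirroring the computation in \eqref{eq:RFDTHM}. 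Uniqueness for \eqref{eq:Lflow} then follows by reversing this correspondence: any closed solution of \eqref{eq:Lflow} can be pulled back by the inverse DeTurck diffeomorphisms to produce a solution of the parabolic modified flow, to which the uniqueness already obtained applies; this direction is a little more delicate because the diffeomorphisms themselves depend on the solution.

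The main obstacle is exactly the degeneracy of $\sigma_\xi$: because \eqref{eq:Lflow.DT} fails to be parabolic in the non-closed directions, one cannot simply invoke the standard short-time existence theorem for strictly parabolic systems. Making the reduction to the closed class rigorous—so that the constraint $\d\varphi=0$ is honestly compatible with a strictly parabolic evolution and the gauge freedom in $\eta$ is handled correctly—is the delicate technical point carried out by Bryant--Xu \cite{BryantXu}, and it is where the real work lies. By contrast, the DeTurck step and the passage back to \eqref{eq:Lflow} are formally identical to the Ricci flow case.
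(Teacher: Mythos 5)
Your overall skeleton---DeTurck's trick with the vector field \eqref{eq:DT.vfield}, restriction to the closed class via $\varphi(t)=\varphi_0+\d\eta(t)$, pull-back by the diffeomorphisms of \eqref{eq:DT}, and uniqueness by reversing the correspondence---coincides with the paper's proof, and your diagnosis that \eqref{eq:Lflow.DT} is parabolic only in the closed directions is exactly the paper's starting point. The genuine gap is the sentence asserting that ``after a suitable gauge-fixing \dots the restricted operator becomes strictly parabolic, and standard parabolic theory \dots yields a unique short-time solution.'' This is precisely the step the paper says \emph{cannot} be carried out: the exact $3$-forms (or the closed forms in a fixed class) are not the sections of any vector bundle over $M$---the constraint $\d\varphi=0$ is differential, not pointwise-algebraic---so there is no off-the-shelf nonlinear parabolic existence theorem to quote for the restricted flow, however it is parametrised. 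If instead you descend to the potential $\eta$, the evolution of $\eta$ is determined only up to addition of closed $2$-forms; you may well add a gauge term such as $-\d\d^*_{g_\varphi}\eta$ to try to control the degenerate directions, but verifying strict parabolicity of the resulting equation requires the full linearisation of the right-hand side as an operator on $2$-forms, whereas the available computation \eqref{eq:Lflow.DT.lin} determines the linearisation only \emph{after} applying $\d$, i.e.\ up to a second-order operator $c(\zeta)$ valued in closed $2$-forms. The principal symbol of $c$ lands in $\ker(\xi\wedge\cdot)=\xi\wedge\Lambda^1$ and is invisible to the $3$-form computation, and nothing in your argument rules out that it destroys negativity of the symbol there. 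So ``gauge-fix and cite standard theory'' is not a proof; it is a restatement of the open difficulty. You also misattribute this mechanism to Bryant--Xu: their contribution is not a gauge-fixing that renders the equation strictly parabolic.

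What actually closes this gap in the paper is the Nash--Moser inverse function theorem in the tame Fr\'echet category \cite{HamIFT}, which is absent from your proposal. One defines the map $\mathcal{F}(\d\eta)=\bigl(\partial_t\d\eta-\Delta_{\varphi_0+\d\eta}\d\eta-\d(X(\varphi_0+\d\eta)\lrcorner\d\eta),\,\d\eta(0)\bigr)$ on the tame Fr\'echet space of time-dependent \emph{exact} forms, so that solving \eqref{eq:Lflow.DT2} amounts to inverting $\mathcal{F}$ near $0$. Linear parabolic theory is used only where it is legitimate: each linearisation $\d\mathcal{F}|_{\d\eta}$ is governed by \eqref{eq:Lflow.DT.lin}, which is an honest parabolic equation on all $3$-forms that preserves exactness, so it can be solved uniquely with exact data. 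One then checks that $\mathcal{X}$, $\mathcal{Y}$ are tame Fr\'echet spaces (Hodge theory), that $\mathcal{F}$ is a smooth tame map, and---the key input---that the family of inverses $\mathcal{G}(\d\eta,\cdot)=\d\mathcal{F}|_{\d\eta}^{-1}$ is a smooth tame map, by Hamilton's theorem on families of inverses arising from parabolic equations \cite{Hamilton}. Nash--Moser then gives local invertibility of $\mathcal{F}$, i.e.\ unique short-time solvability of \eqref{eq:Lflow.DT2}, after which your DeTurck pull-back and uniqueness steps go through as stated. Without this (or a genuinely new replacement for it, which you would have to construct, not assert), your proposal does not establish the theorem.
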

 
 \begin{proof}
We know that if \eqref{eq:Lflow} exists then it will stay in the cohomology class $[\varphi_0]$ by Proposition \ref{prop:Lflow.closed}.  Therefore, we could write 
\begin{equation}
\varphi(t)=\varphi_0+\d\eta(t)
\end{equation}
for a family of exact 3-forms $\d\eta(t)$, and \eqref{eq:Lflow} with the initial condition $\varphi(t)=\varphi_0$ is equivalent to 
\begin{equation}
\frac{\partial}{\partial t}\d\eta =\Delta_{\varphi_0+\d\eta}\d\eta\quad\text{and}\quad \d\eta(0)=0.
\end{equation}

Let $X(\varphi)$ be the vector field given by $X(g_{\varphi})$ in \eqref{eq:DT.vfield}, where we can choose the fixed background metric $k=g_{\varphi_0}$ for example.  Suppose we can solve 
\begin{equation}\label{eq:Lflow.DT2}
\frac{\partial}{\partial t}\d\eta(t)=\Delta_{\varphi_0+\d\eta}\d\eta
+\d(X(\varphi_0+\d\eta)\lrcorner\d\eta)\quad\text{and}\quad \d\eta(0)=0
\end{equation}
uniquely for short time.  Then we can find a unique family of diffeomorphisms $\Phi$ solving \eqref{eq:DT} (since this is the harmonic map heat flow).  Therefore, just as in Proposition \ref{prop.RicciDT}, it follows that $\varphi=\Phi^*(\varphi_0+\d\eta)$ satisfies \eqref{eq:Lflow} with $\varphi(0)=\varphi_0$, $\d\varphi(t)=0$ for short time, and the solution is unique.  (We have used here that the condition for a 3-form to be positive is open, so for $t$ sufficiently small, $\varphi_0+\d\eta(t)$ will be a positive 3-form.) 
We are therefore left to show that \eqref{eq:Lflow.DT2} has a unique short-time solution.  

We know from \eqref{eq:Delta.metric} and 
\cite[Lemma 9.3]{LotayWei.Shi} that
\begin{align}
\Delta_{\varphi}\varphi+\mathcal{L}_{X(\varphi)}\varphi
&=
\frac{1}{2} i_{\varphi}\big(-2\Ric(g_{\varphi})+\mathcal{L}_{X(g_{\varphi})}g_{\varphi}+\frac{2}{21}|\tau_2|^2g_{\varphi}+\frac{1}{4}j_{\varphi}(*_{\varphi}(\tau_2\wedge\tau_2))\big)\nonumber\\&\quad+\frac{1}{2}
\big(\d^*(X(\varphi)\lrcorner\varphi)\big)^{\sharp}\lrcorner*_{\varphi}\!\varphi.\label{eq:Lflow.DT3}
\end{align}
Given that the terms with $\tau_2$ in them in \eqref{eq:Lflow.DT3} are lower order, and the Ricci--DeTurck flow \eqref{eq:RicciDT} is parabolic, it would seem likely that the linearisation of \eqref{eq:Lflow.DT3} is parabolic when restricted to closed forms, and hence that \eqref{eq:Lflow.DT2} is parabolic.  In fact, one may explicitly compute as in \cite{BryantXu} that the linearisation of \eqref{eq:Lflow.DT2} in the direction of exact forms is
\begin{equation}\label{eq:Lflow.DT.lin}
\frac{\partial}{\partial t}\d\zeta=-\Delta_{\varphi_0+\d\eta}\d\zeta+\d\big( Q_{\varphi_0+\d\eta}(\d\zeta)\big),
\end{equation}
where $Q_{\varphi_0+\d\eta}(\d\zeta)$ is order zero in $\d\zeta$ (meaning it depends on just $\d\zeta$ and not its derivatives).   
 Somewhat surprisingly, the sign has switched and \eqref{eq:Lflow.DT.lin} is manifestly parabolic.  

However, we have only shown that \eqref{eq:Lflow.DT2} is parabolic in the direction of exact forms, so we cannot apply standard parabolic theory. Instead we will invoke the Nash--Moser Inverse Function Theorem (see \cite{HamIFT} for a detailed discussion of this theorem).
 
We start by setting up the notation.  We let
\begin{equation}
\mathcal{X}=\d\big(C^{\infty}([0,\epsilon]\times M, \Lambda^2T^*M)\big)\quad\text{and}\quad \mathcal{Y}=\d\Omega^2(M),
\end{equation}
and 
\begin{equation}
\mathcal{U}=\{\d\eta\in\mathcal{X}\,:\,\varphi_0+\d\eta(t)\text{ is a $\GG_2$ structure for all $t$}\},
\end{equation}
which is an open set in $\mathcal{X}$ containing $0$.  We define $\mathcal{F}:\mathcal{U}\to\mathcal{X}\times\mathcal{Y}$ by
\begin{equation}
\mathcal{F}(\d\eta)=\left(\frac{\partial}{\partial t}\d\eta(t)-\Delta_{\varphi_0+\d\eta}\d\eta
-\d(X(\varphi_0+\d\eta)\lrcorner\d\eta),\d\eta(0)\right),
\end{equation}
so that $\mathcal{F}(\d\eta)=(0,0)$ if and only if $\d\eta$ solves \eqref{eq:Lflow.DT2}.  If we can show that $\mathcal{F}$ is locally invertible near $0$, we have that \eqref{eq:Lflow.DT2} has a unique short-time solution and so the proof is complete.

By \eqref{eq:Lflow.DT.lin}, the linearisation of $\mathcal{F}$ at $\d\eta\in\mathcal{U}$ is given by
\begin{equation}
\d\mathcal{F}|_{\d\eta}(\d\zeta)=\left(\frac{\partial}{\partial t}\d\zeta+\Delta_{\varphi_0+\d\eta}\d\zeta-\d\big( Q_{\varphi_0+\d\eta}(\d\zeta)\big),\d\zeta(0)\right).
\end{equation}

Since \eqref{eq:Lflow.DT.lin} is parabolic, we see that if $\d\mathcal{F}|_{\d\eta}(\d\zeta)=(0,0)$ then $\d\zeta=0$ as this is the unique solution to the linear parabolic equation \eqref{eq:Lflow.DT.lin} with zero initial condition.  Further, given $(\d\xi,\d\xi_0)\in\mathcal{X}\times\mathcal{Y}$, we see that $\d\mathcal{F}|_{\d\eta}(\d\zeta)=(\d\xi,\d\xi_0)$ if and only if 
\begin{equation}
\frac{\partial}{\partial t}\d\zeta=-\Delta_{\varphi_0+\d\eta}\d\zeta+\d\big( Q_{\varphi_0+\d\eta}(\d\zeta)\big)+\d\xi\quad\text{and}\quad \d\zeta(0)=\d\xi_0,
\end{equation}
which can be solved by linear parabolic theory.  

We therefore have that $\d\mathcal{F}|_{\d\eta}:\mathcal{X}\to\mathcal{X}\times\mathcal{Y}$ is invertible for all $\d\eta\in\mathcal{U}$.  However, this is not yet enough to show that $\mathcal{F}$ is locally invertible.  Notice that the family of inverses provides a map
$\mathcal{G}:\mathcal{U}\times\mathcal{X}\times\mathcal{Y}\to\mathcal{X}$ 
given by
\begin{equation}
\mathcal{G}(\d\eta,\d\xi,\d\xi_0)=\d\mathcal{F}|_{\d\eta}^{-1}(\d\xi,\d\xi_0).
\end{equation}
To deduce that $\mathcal{F}$ is locally invertible, as we said, we want to invoke the Nash--Moser Inverse Function Theorem which means that we need the following:
\begin{itemize}
\item $\mathcal{X}$ and $\mathcal{Y}$ are tame Fr\'echet spaces and $\mathcal{F}$ is a smooth tame map;
\item $\d\mathcal{F}|_{\d\eta}$ is invertible for all $\d\eta\in\mathcal{U}$ and $\mathcal{G}$ is a smooth tame map.
\end{itemize}

The fact that $C^{\infty}([0,\epsilon]\times M,\Lambda^2T^*M)$ and $\Omega^2(M)$ are naturally tame Fr\'echet spaces is standard, and it therefore quickly follows (from Hodge theory) that $\mathcal{X}$ and $\mathcal{Y}$ are also tame Fr\'echet spaces.  Smooth partial differential operators are smooth tame maps, so $\mathcal{F}$ is a smooth tame map.

The last thing we need to show is that $\mathcal{G}$ is a smooth tame map, 
but this follows immediately from a general result about the family of inverses given by solutions of a smooth family of parabolic partial differential equations, due to Hamilton \cite{Hamilton}.  

Thus, the Nash--Moser Inverse Function Theorem applies to $\mathcal{F}$ and so it is locally invertible as desired.
 \end{proof}
 
 Actually, before DeTurck's trick the same method of Bryant--Xu was used by Hamilton \cite{Hamilton} to prove short-time existence of the Ricci flow.  The reason is that the Ricci flow is not a flow amongst all symmetric 2-tensors really, since the Ricci tensor always satisfies the contracted Bianchi identity.  Therefore, one could consider the flow restricted to those which satisfy this identity.  To prove rigorously that one can do this, one must employ the Nash--Moser Inverse Function Theorem, as Hamilton did.  This is not needed as we have said for the Ricci flow, since DeTurck's trick already removes the issue caused by the Bianchi identity there, but it is needed currently for the Laplacian flow.

\subsection{Results and questions}

\paragraph{Results.} There are several important areas in the Laplacian flow for closed $\GG_2$ structures where progress has been made.
\begin{itemize}
\item Long-time existence criteria based on curvature and torsion estimates along the flow, uniqueness and compactness theory, and real analyticity of the flow \cite{LotayWei.Shi,LotayWei.ra}
\item Stability of the critical points \cite{LotayWei.stab}.
\item Non-collapsing under assumption of bounded torsion \cite{GaoChen}.  
\item Explicit study of the flow in homogeneous situations and other symmetric cases such as nilmanifolds and warped products with a circle  \cite{FFM, FinoRaf, Lauret1, Lauret2, MOV, Nicolini}.
\item Examples and non-existence results for solitons \cite{Lauret1, Lauret2,Lin, LotayWei.Shi}.
\item Eternal solutions for the flow arising from extremally Ricci pinched $\GG_2$ structures \cite{FinoRaf2}.
\item Reduction of the flow to 4 dimensions, with improved long-time existence criteria \cite{FineYao} and analysis of the 4-torus case \cite{HWY}.
\item Reduction of the flow to 3 dimensions, with striking long-time existence and convergence results \cite{LambertL}.
\end{itemize}
It is worth remarking that the scalar curvature here is given by
\begin{equation}
R(g_{\varphi})=-\frac{1}{2}|\tau_2|^2,
\end{equation}
so having a bound on torsion is equivalent to a bound on scalar curvature.

\paragraph{Questions.} There are many open problems in the area.
\begin{itemize}
\item Does the flow exist as long as the torsion is bounded?
\item Can a volume bound be used to control the flow?
\item Are there any compact examples which develop a singularity in finite time?
\item Is there a relationship between the flow and calibrated submanifolds, specifically coassociative submanifolds?
\end{itemize}
For the last two points, there is an example due to Bryant \cite{Bryant} which shows that singularities can happen at infinite time (i.e.~the flow exists for all time but does not converge), and that the singularity is related to coassociative geometry.  

We can also ask whether the Laplacian flow is potentially useful to study other classes of $\GG_2$ structures.  For example, naively if we assume
$\varphi$ is coclosed and the flow exists then it should stay coclosed since then
\begin{equation}
\Delta_{\varphi}\varphi=\d^*_{\varphi}\d\varphi.
\end{equation}
So, an obvious question is: does this flow exist?  Gavin Ball has informed the author that, in general, the Laplacian flow will \emph{not} preserve the coclosed condition, so the answer would appear to be negative.

\section{Laplacian coflow}

Another approach to studying $\GG_2$ structures was introduced in \cite{KarMcKayTsui}. 

\begin{dfn}
The Laplacian coflow for $\GG_2$ structures is given by:
\begin{equation}\label{eq:Lcoflow}
\frac{\partial}{\partial t}*_{\varphi}\!\varphi=\Delta_{*_{\varphi}\varphi}\!*_{\varphi}\!\varphi,
\end{equation}
where $\Delta_{*_{\varphi}\varphi}$ is the Hodge Laplacian of the metric determined by $*_{\varphi}\varphi$.
(Actually, in \cite{KarMcKayTsui}, they introduced \eqref{eq:Lcoflow} with a minus sign on the right-hand side by analogy with the heat equation \eqref{eq:heat}, but as we shall see below the ``correct'' sign in the equation is that given, just as in \eqref{eq:Lflow}.)  
\end{dfn}

\noindent 
Here one has to be a little careful since the 4-form $*_{\varphi}\varphi$ is not quite equivalent to the 3-form $\varphi$.  In particular, the 4-form does not determine the orientation, but we can assume we have an initial orientation which stays fixed along the flow. 

 Again by integration by parts it is easy to see that on a compact manifold
\begin{equation}
\Delta_{*_{\varphi}\varphi}\!*_{\varphi}\!\varphi=0\quad\Leftrightarrow\quad
\d\varphi=\d^*_{\varphi}\varphi=0,
\end{equation}
so the critical points are again the torsion-free $\GG_2$ structures.

\subsection{Coclosed $\GG_2$ structures}

The proposal in \cite{KarMcKayTsui} is to restrict \eqref{eq:Lcoflow} to 
closed 4-forms (so coclosed $\GG_2$ structures).  If the flow exists, meaning it preserves closed forms as in the Laplacian flow setting, we would have that:
\begin{equation}
\frac{\partial}{\partial t}*_{\varphi}\!\varphi=\d\d^*_{\varphi}\!*_{\varphi}\!\varphi.
\end{equation}
Thus, again, the flow will stay in the given cohomology class $[*_{\varphi}\varphi(0)]$ as long as it exists.  Therefore, the Laplacian coflow can be seen as a possible means to deform $*_{\varphi}\varphi$ in its cohomology class so that it becomes torsion-free.

\begin{prop}
If $*_{\varphi}\varphi(t)$ satisfies \eqref{eq:Lcoflow} on $M$ and 
$\d\!*_{\varphi}\!\varphi(t)=0$ then
$*_{\varphi}\varphi\in[*_{\varphi}\varphi(0)]\in H^4(M)$ for all $t$ for which the flow exists.
\end{prop}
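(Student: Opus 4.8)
The plan is to mirror exactly the argument used for the Laplacian flow in Proposition \ref{prop:Lflow.closed}, since the coflow equation \eqref{eq:Lcoflow} has the same structure as \eqref{eq:Lflow} but applied to the 4-form $*_{\varphi}\varphi$ rather than the 3-form $\varphi$. The key observation is that the statement already assumes $\d *_{\varphi}\!\varphi(t) = 0$ holds along the flow (i.e.\ that the coclosed condition is preserved), so I do not need to prove preservation here; I only need to deduce that the cohomology class of $*_{\varphi}\varphi$ is constant in time.

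First I would use the assumed closedness $\d *_{\varphi}\!\varphi(t) = 0$ to simplify the Hodge Laplacian acting on $*_{\varphi}\varphi$. Since the $\d$ part of $\Delta = \d\d^*_{\varphi} + \d^*_{\varphi}\d$ annihilates a closed form, we have
\begin{equation}
\frac{\partial}{\partial t}*_{\varphi}\!\varphi = \Delta_{*_{\varphi}\varphi}\!*_{\varphi}\!\varphi = \d\d^*_{\varphi}\!*_{\varphi}\!\varphi,
\end{equation}
exactly as already recorded in the paragraph preceding the statement. The crucial point is that the right-hand side is an exact 4-form at every time $t$. Therefore the time derivative of $*_{\varphi}\varphi$ lies in the image of $\d$ throughout the evolution.

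Next I would integrate this observation in time. Writing
\begin{equation}
*_{\varphi}\varphi(t) - *_{\varphi}\varphi(0) = \int_0^t \frac{\partial}{\partial s}*_{\varphi}\!\varphi(s)\, \d s = \int_0^t \d\big(\d^*_{\varphi}\!*_{\varphi}\!\varphi(s)\big)\, \d s = \d\!\left(\int_0^t \d^*_{\varphi}\!*_{\varphi}\!\varphi(s)\, \d s\right),
\end{equation}
where the exterior derivative commutes with the time integral since $\d$ acts only in the spatial directions. Hence $*_{\varphi}\varphi(t) - *_{\varphi}\varphi(0)$ is exact, which is precisely the assertion that $[*_{\varphi}\varphi(t)] = [*_{\varphi}\varphi(0)] \in H^4(M)$ for all $t$ in the interval of existence.

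I do not anticipate a serious obstacle, as the argument is essentially formal once the closedness hypothesis is in hand. The only point requiring mild care is the justification that one may pull $\d$ outside the time integral and that the integrand is sufficiently regular for this manipulation, but this is routine given the smoothness of the flow on its interval of existence. The genuinely substantive content — namely that the coclosed (equivalently, closed 4-form) condition is in fact \emph{preserved} by \eqref{eq:Lcoflow} — is assumed in the hypothesis and is not part of what this statement asks us to prove, exactly as in the closed case of Proposition \ref{prop:Lflow.closed}.
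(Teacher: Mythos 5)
Your proposal is correct and follows essentially the same route as the paper: the hypothesis $\d *_{\varphi}\!\varphi(t)=0$ reduces the flow to $\frac{\partial}{\partial t}*_{\varphi}\!\varphi=\d\d^*_{\varphi}\!*_{\varphi}\!\varphi$, whose right-hand side is exact, so the cohomology class is constant in time. The paper leaves the final step implicit, whereas you spell out the (routine) integration in time and the commutation of $\d$ with the time integral; this is a fair elaboration, not a different argument.
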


We can easily modify our discussion of the volume functional $\Vol(\varphi)$, given in \eqref{eq:Vol}, and the Laplacian flow in Proposition \ref{prop:Lflow.gradientflow} and Theorem \ref{thm:Vol.max.closed} to show the following. 

\begin{thm} The flow \eqref{eq:Lcoflow} for coclosed $\GG_2$ structures $*_\varphi\varphi$ is the gradient 
flow of the volume functional in \eqref{eq:Vol} restricted to $[*_{\varphi}\varphi]$ and the  critical points are strict local maxima for the volume functional (modulo diffeomorphisms).  
\end{thm}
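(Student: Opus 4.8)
The plan is to run the arguments of Proposition \ref{prop:Lflow.gradientflow} and Theorem \ref{thm:Vol.max.closed} essentially verbatim, but with the 4-form $\psi:=*_\varphi\varphi$ playing the role previously taken by the 3-form $\varphi$, exploiting Hitchin duality: since $**=\id$ on both 3- and 4-forms in seven dimensions, we have $\varphi=*_\psi\psi$, where $*_\psi$ denotes the Hodge star of the metric $g_\psi$ determined by $\psi$. First I would record the first-variation formula for $\Vol$ expressed through $\psi$. Since $\Vol(\varphi)=\frac{1}{7}\int_M\psi\wedge\varphi$ by \eqref{eq:Vol} and the volume density $\vol_\psi$ is homogeneous of degree $\frac{7}{4}$ in $\psi$, the dual analogue of \eqref{eq:vol.flow} is
\begin{equation}
\frac{\partial}{\partial t}\vol_\psi=\tfrac{1}{4}\,\frac{\partial\psi}{\partial t}\wedge\varphi,
\end{equation}
whose constant is pinned down by testing against the rescaling $\psi\mapsto e^{4t}\psi$ (under which $g_\psi\mapsto e^{2t}g_\psi$ and $\vol_\psi\mapsto e^{7t}\vol_\psi$, while $\psi\wedge\varphi=7\vol_\psi$). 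Integrating over $M$ and using $*_\psi\varphi=\psi$ yields $\frac{\partial}{\partial t}\Vol=\frac{1}{4}\langle\frac{\partial\psi}{\partial t},\psi\rangle_{L^2}$, the exact dual of \eqref{eq:Vol.flow}.

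Next I would restrict to the cohomology class of closed 4-forms (coclosed $\GG_2$ structures) and write $\psi(t)=\psi(0)+\d\xi(t)$ for a family of 3-forms $\xi$. Integration by parts then gives
\begin{equation}
\frac{\partial}{\partial t}\Vol=\tfrac{1}{4}\langle\tfrac{\partial}{\partial t}\d\xi,\psi\rangle_{L^2}=\tfrac{1}{4}\langle\tfrac{\partial}{\partial t}\xi,\d^*_\psi\psi\rangle_{L^2},
\end{equation}
so the gradient flow is $\frac{\partial}{\partial t}\xi=\d^*_\psi\psi$, which on differentiating produces $\frac{\partial}{\partial t}\psi=\d\d^*_\psi\psi=\Delta_\psi\psi$, the final equality using $\d\psi=0$. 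This is exactly \eqref{eq:Lcoflow}, confirming both that the coflow is the gradient flow of $\Vol$ restricted to $[\psi]$ (up to a harmless time rescaling) and that the sign quoted in the definition is the one making it the \emph{positive} gradient flow; the critical points are precisely the $\psi$ with $\d^*_\psi\psi=0$, that is, the torsion-free structures.

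For the local-maximum statement I would mimic the second-variation computation preceding Theorem \ref{thm:Vol.max.closed}. At a critical point $\psi(0)$, differentiating the first variation once more and discarding the second-order term (which vanishes since $\d^*_\psi\psi=0$ there) gives $\frac{\partial^2}{\partial s^2}\Vol|_{s=0}=\frac{1}{4}\int_M\frac{\partial\psi}{\partial s}\wedge\frac{\partial\varphi}{\partial s}|_{s=0}$. Because the wedge of a 4-form and a 3-form is symmetric, this integrand is identical to the one in \eqref{eq:second.var}, so \eqref{eq:second.var2} applies directly and the quadratic form equals $c_0\|f_0\|^2_{L^2}+c_1\|f_1\|^2_{L^2}-\|f_3\|^2_{L^2}$ in the parameters of \eqref{eq:general.flow} and \eqref{eq:starphi}. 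It then remains only to rerun the slice argument for the constraint that $[\psi]$ (rather than $[\varphi]$) is fixed, forcing $f_0=f_1=0$ orthogonally to the diffeomorphism orbit and leaving $-\|f_3\|^2_{L^2}\leq 0$; hence the critical points are strict local maxima modulo diffeomorphisms.

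The main subtlety throughout is tracking the dependence of $*_\psi$ (equivalently $g_\psi$) on $\psi$ when differentiating: this is what fixes the constant $\frac{1}{4}$ in the first variation and underlies the reduction of the second-variation integrand to \eqref{eq:second.var2}, and it rests on Hitchin's nonlinear algebra relating $\psi$ to $\varphi=*_\psi\psi$. I expect the only genuinely new point — as opposed to routine dualization of the closed case — to be the slice-theorem step, namely verifying that orthogonality to the diffeomorphism orbit in the 4-form picture again annihilates exactly the $f_0$- and $f_1$-modes, so that the surviving $-\|f_3\|^2_{L^2}$ gives strict negativity. Since this is structurally the same as for Theorem \ref{thm:Vol.max.closed}, I anticipate no essential difficulty, only careful bookkeeping.
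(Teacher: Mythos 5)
The first half of your proposal is fine and is exactly the dualization the paper intends: the constant $\frac{1}{4}$ in the first variation (which should really be justified by noting that the $\Lambda^4_7$ and $\Lambda^4_{27}$ components of $\frac{\partial\psi}{\partial t}$ wedge to zero against $\varphi$ by the type decomposition and \eqref{eq:starphi}, not by scaling alone), the identity $\frac{\partial}{\partial t}\Vol=\frac{1}{4}\langle\frac{\partial\psi}{\partial t},\psi\rangle_{L^2}$, and the conclusion that the induced flow on potentials $\xi$ gives $\frac{\partial}{\partial t}\psi=\d\d^*_\psi\psi=\Delta_\psi\psi$ are all correct, as is the reduction of the second variation at a critical point to the quadratic form of \eqref{eq:second.var2}.

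The genuine gap is the step you dismiss as ``careful bookkeeping'': the slice argument does not dualize, and in fact it fails. The source of the asymmetry is that $\Lambda^2=\Lambda^2_7\oplus\Lambda^2_{14}$ has no type-$1$ summand, while $\Lambda^3=\Lambda^3_1\oplus\Lambda^3_7\oplus\Lambda^3_{27}$ does. In the closed case the orbit directions $\mathcal{L}_X\varphi=\d(X\lrcorner\varphi)$ exhaust $\d\Omega^2_7$, which is why orthogonality to the orbit can annihilate both the $f_0$- and $f_1$-modes of an exact variation. In the coclosed case the orbit directions $\mathcal{L}_X\psi=\d(X\lrcorner\psi)$ exhaust only $\d\Omega^3_7$, and the variations
\begin{equation*}
\chi_f=\d(f\varphi)=\d f\wedge\varphi\in\Omega^4_7,\qquad f\in C^\infty(M),
\end{equation*}
(using $\d\varphi=0$ at the critical point) are exact, tangent to $[\psi]$, of pure type $7$ (so $f_0=f_3=0$, $f_1=\d f$), and are \emph{not} orbit directions: $\d f\wedge\varphi=\mathcal{L}_X\psi$ forces, by comparing $\Lambda^4_1$ and $\Lambda^4_{27}$ components via \eqref{eq:metric}, that $X$ is Killing, hence parallel (compact and Ricci-flat), hence $\d f=0$. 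Since the Hessian pairs to zero with orbit directions at a critical point, the projection of $\chi_f$ onto your slice is nonzero for non-constant $f$, and the second variation there equals $c_1\|\d f\|^2_{L^2}>0$ by \eqref{eq:second.var2}. So orthogonality to the orbit does kill $f_0$ (that part of your argument can be carried out), but it does not kill $f_1$: the Hessian on the slice is indefinite, and the critical points are saddle points of $\Vol$ on $[\psi]$ rather than strict local maxima. This is not a defect that more bookkeeping will repair; it is precisely the phenomenon behind the paper's own later remark that \eqref{eq:Lcoflow} fails to be parabolic even in the direction of closed forms, and behind the modification \eqref{eq:Lcoflow.modified} of \cite{Grig1}, whose extra term $\d\left(\left(c-\frac{7}{2}\tau_0\right)\varphi\right)$ acts exactly in these $\d(f\varphi)$ directions. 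So the ``strict local maxima'' clause cannot be obtained by transplanting the proof of Theorem \ref{thm:Vol.max.closed}, and any honest treatment must confront the $\d(f\varphi)$ modes directly.
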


\noindent At this point, things are looking quite good in the study of the Laplacian coflow.

However, it is now worth going back to the earlier discussion of the coclosed condition \eqref{eq:coclosed}.  Taking a positive interpretation of the h-principle result (Theorem \ref{thm:hprinciple}), we can always assume that our $\GG_2$ structure is coclosed and therefore \eqref{eq:Lcoflow} potentially allows us to study the space of all $\GG_2$ structures whilst restricting to the coclosed ones.  A more negative outlook is to say that studying the Laplacian coflow means that we are effectively studying all $\GG_2$ structures, which only has topological rather than geometric content, and so perhaps it is unrealistic to expect the flow to be well-behaved in such generality.

\subsection{Short-time existence: a modification}

Given this discussion it is worth tackling the key issue of whether the flow \eqref{eq:Lcoflow} even exists.  To tackle this, we would employ the DeTurck trick and hope to show that \eqref{eq:Lcoflow} is parabolic, possibly only in the direction of closed forms, just as in the case of \eqref{eq:Lflow} as in Theorem \ref{thm:Lflow.short}.  Unfortunately, this does not work!  Therefore, we cannot currently say (regardless of which sign we choose in \eqref{eq:Lcoflow}) that the Laplacian coflow even exists.  Nonetheless, one can find solutions to it in special cases \cite{KarMcKayTsui}, so we can continue to ask the question: does \eqref{eq:Lcoflow} exist restricted to coclosed $\GG_2$ structures?

An approach taken in \cite{Grig1} is to modify \eqref{eq:Lcoflow} to get a family of flows depending on an arbitrary constant $c$.  

\begin{dfn}
The modified Laplacian coflow(s) (recalling the torsion forms in \eqref{eq:torsion}) for $c\in\R$ is defined by
\begin{equation}\label{eq:Lcoflow.modified}
\frac{\partial}{\partial t}*_{\varphi}\!\varphi=\Delta_{*_{\varphi}\varphi}\!*_{\varphi}\!\varphi+\d\left( \left(c-\frac{7}{2}\tau_0\right)\varphi\right),
\end{equation}
again restricted to coclosed $\GG_2$ structures.
\end{dfn}

This again moves in the cohomology class and has the added benefit of defining a flow which is parabolic in the direction of closed forms (modulo diffeomorphisms), and therefore \eqref{eq:Lcoflow.modified} is guaranteed to exist on a compact $7$-manifold by essentially the same proof as Theorem \ref{thm:Lflow.short}.

\begin{thm}\label{thm:Lcoflow.short}
 Let $\varphi_0$ be a smooth coclosed $\GG_2$ structure on a compact manifold $M$.  There exists $\epsilon>0$ so that a unique solution $*_{\varphi}\varphi(t)$ to the modified Laplacian coflow \eqref{eq:Lcoflow.modified} with $*_{\varphi}\varphi(0)=*_{\varphi_0}\varphi_0$ and $\d*_{\varphi}\varphi(t)=0$ exists for all $t\in [0,\epsilon]$.
\end{thm}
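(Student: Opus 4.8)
The plan is to mirror the proof of Theorem~\ref{thm:Lflow.short} almost verbatim, the only genuinely new input being a symbol computation that explains the role of the correction term. Since the proposition immediately preceding the statement shows that \eqref{eq:Lcoflow.modified} keeps $*_\varphi\varphi(t)$ in the fixed cohomology class $[*_{\varphi_0}\varphi_0]\in H^4(M)$, I would write
\begin{equation}
*_\varphi\varphi(t)=*_{\varphi_0}\varphi_0+\d\psi(t),\qquad \d\psi(0)=0,
\end{equation}
for a family of $3$-forms $\psi(t)$, turning \eqref{eq:Lcoflow.modified} into a flow on the exact $4$-forms $\d\psi$. As in the Laplacian flow case I would then apply DeTurck's trick using the same vector field $X(\varphi)$ from \eqref{eq:DT.vfield} (built from $g_\varphi$ relative to the fixed background metric $g_{\varphi_0}$), adding $\mathcal{L}_{X(\varphi)}*_\varphi\varphi=\d(X(\varphi)\lrcorner *_\varphi\varphi)$ to the right-hand side. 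Solving this DeTurck--coflow together with the harmonic map heat flow \eqref{eq:DT} for a family of diffeomorphisms $\Phi$, the pullback $\Phi^*(*_{\varphi_0}\varphi_0+\d\psi)$ recovers a solution of \eqref{eq:Lcoflow.modified} exactly as in Proposition~\ref{prop.RicciDT}; positivity of the underlying $3$-form persists for short time by openness, and the orientation is held fixed as noted above. It therefore suffices to solve the DeTurck--coflow for $\d\psi$ for short time.

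The crux, and the sole essential departure from Theorem~\ref{thm:Lflow.short}, is parabolicity. For the \emph{unmodified} coflow \eqref{eq:Lcoflow} the DeTurck trick alone fails to yield a parabolic equation even in the direction of closed forms --- this is precisely the obstruction flagged in the discussion above --- so one cannot simply transcribe the earlier argument. The entire purpose of the term $\d((c-\frac{7}{2}\tau_0)\varphi)$ is to repair the principal symbol. Linearising the DeTurck--coflow in the direction of exact $4$-forms $\d\sigma$, I expect to obtain an equation of the form
\begin{equation}
\frac{\partial}{\partial t}\d\sigma=-\Delta_{*_\varphi\varphi}\d\sigma+\d\big(Q_{\varphi}(\d\sigma)\big),
\end{equation}
where $Q_\varphi$ is of order zero in $\d\sigma$, so that the sign is the parabolic one. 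The point is that $\tau_0$ is first order in $\varphi$, hence $\d(\frac{7}{2}\tau_0\varphi)$ is genuinely second order and its linearisation contributes to the principal symbol exactly so as to cancel the non-elliptic part of the symbol of $\Delta_{*_\varphi\varphi}*_\varphi\varphi$ restricted to exact forms, leaving a parabolic operator; the $c\varphi$ term only contributes at lower order. Carrying out this cancellation --- verifying that the correction kills the bad symbol terms --- is the main obstacle and the one nontrivial calculation, and is the computation performed in \cite{Grig1}.

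With parabolicity in the exact direction in hand, the remainder is identical to the Nash--Moser step of Theorem~\ref{thm:Lflow.short}. I would set
\begin{equation}
\mathcal{X}=\d\big(C^\infty([0,\epsilon]\times M,\Lambda^3 T^*M)\big),\qquad \mathcal{Y}=\d\Omega^3(M),
\end{equation}
let $\mathcal{U}\subset\mathcal{X}$ be the open set of $\d\psi$ for which $*_{\varphi_0}\varphi_0+\d\psi(t)$ is the Hodge dual of a $\GG_2$ structure (with respect to the fixed orientation) for all $t$, and define $\mathcal{F}:\mathcal{U}\to\mathcal{X}\times\mathcal{Y}$ sending $\d\psi$ to the pair consisting of the DeTurck--coflow residual and $\d\psi(0)$, so that $\mathcal{F}(\d\psi)=(0,0)$ recovers the desired solution. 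Because the linearisation is parabolic on exact forms, $\d\mathcal{F}|_{\d\psi}$ is invertible for every $\d\psi\in\mathcal{U}$ by linear parabolic theory (uniqueness from the zero-initial-data problem, existence by solving the inhomogeneous linear equation). The spaces $\mathcal{X},\mathcal{Y}$ are tame Fr\'echet spaces and $\mathcal{F}$ is a smooth tame map, while the family of inverses is a smooth tame map by Hamilton's result \cite{Hamilton}. The Nash--Moser Inverse Function Theorem then yields local invertibility of $\mathcal{F}$ near $0$, hence a unique short-time solution $\d\psi$, completing the proof.
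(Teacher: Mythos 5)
Your proposal is correct and follows essentially the same route as the paper, which proves this theorem by running the Nash--Moser/DeTurck argument of Theorem~\ref{thm:Lflow.short} verbatim on exact $4$-forms, with the parabolicity in the direction of closed forms (the one genuinely new ingredient, supplied by the $-\frac{7}{2}\tau_0$ correction term) taken from \cite{Grig1}. Your identification of that symbol repair as the crux, and your deferral of the computation to \cite{Grig1}, matches exactly what the paper does.
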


\paragraph{Critical points.} The short-time existence is of course important, but \eqref{eq:Lcoflow.modified} is no longer obviously a gradient flow and there is also no reason why the critical points of the flow are torsion-free $\GG_2$ structures.  It is clear that if $\varphi$ is torsion-free then the right-hand side of \eqref{eq:Lcoflow.modified} vanishes (since $\tau_0=0$ and $\d\varphi=0$).  However, suppose we choose a nearly parallel $\GG_2$ structure $\varphi$ as in \eqref{eq:nearly.parallel},  recalling that $\tau_0$ is constant in that case.  Then we see that
\begin{equation}
\Delta_{*_\varphi\varphi}\!*_{\varphi}\!\varphi=\d\d^*_{\varphi}\!*_{\varphi}\!\varphi=
\d\!*_{\varphi}\!\d\varphi=\d\!*_{\varphi}\!\tau_0\!*_{\varphi}\!\varphi=\tau_0\d\varphi=\tau_0^2\!*_{\varphi}\!\varphi
\end{equation}
and
\begin{equation}
\d\left(\left(c-\frac{7}{2}\tau_0\right)\varphi\right)=\tau_0\left(c-\frac{7}{2}\tau_0\right)*_{\varphi}\!\varphi.
\end{equation}
Hence,
\begin{equation}
\Delta_{*_{\varphi}\varphi}\!*_{\varphi}\!\varphi+\d\left( \left(c-\frac{7}{2}\tau_0\right)\varphi\right)=\tau_0\left(c-\frac{5}{2}\tau_0\right)\!*_{\varphi}\!\varphi,
\end{equation}
which vanishes if $c=\frac{5}{2}\tau_0$.  Therefore, we will also get certain nearly parallel $\GG_2$ structures as critical points.  Notice here that we can change $\tau_0$ by rescaling our nearly parallel $\GG_2$ structure, so the flow will distinguish a certain scale for the nearly parallel $\GG_2$ structures.  For example, the 7-sphere has a canonical 
nearly parallel $\GG_2$ structure, and only the 7-sphere of a certain size (depending on a choice of positive $c$) will be a critical point whereas others will not.  

Altogether, this is a rather strange situation, which shows that the 
modified Laplacian coflow, though parabolic, has some potentially undesirable properties.

\subsection{Results and questions}

The Laplacian coflow \eqref{eq:Lcoflow} and its modification \eqref{eq:Lcoflow.modified} have so far received rather little attention, but the key results in the area include the following.
\begin{itemize}
\item Soliton solutions arising from warped products and symmetries \cite{Grig2, KarMcKayTsui}.
\item Explicit study of the flow for symmetric situations \cite{FFB, FB, MOV}.
\item Long-time existence criteria based on curvature and torsion estimates along the modified Laplacian coflow, and non-collapsing for \eqref{eq:Lcoflow.modified} 
under assumption of bounded scalar curvature \cite{GaoChen}. 
\end{itemize}

There are also many open problems.
\begin{itemize}
\item Does the Laplacian coflow exist (possibly under some further assumptions)?
\item What are the critical points of the modified Laplacian coflow?
\item What do dimensional reductions of the Laplacian coflow  or its modification look like?
\end{itemize}

\section{Dirichlet energies and spinorial flows}

The final set of flows we will discuss are explicitly constructed as gradient flows.  We have already seen gradient flows of the volume functional \eqref{eq:Vol} for $\GG_2$ structures (when restricted to closed or coclosed $\GG_2$ structures) but these are actually atypical examples of gradient flows.  The reason is that $\Vol(\varphi)$ is of order $0$ in $\varphi$.  Typically, gradient flows are with respect to functionals which are first order in the geometric quantity in question.  For example, mean curvature flow is the gradient flow for the volume functional which is first order in the immersion.  

\subsection{Dirichlet energies}

In light of \eqref{eq:torsion.free}, there are two clear choices for possible functionals on a compact 7-manifold to consider.

\begin{dfn} We let
\begin{equation}\label{eq:Dirichlet.functionals}
\mathcal{C}(\varphi)=\frac{1}{2}\int_M |\nabla_{\varphi}\varphi|_{g_\varphi}^2\vol_{\varphi}\geq 0
\quad\text{and}\quad \mathcal{D}(\varphi)= \frac{1}{2}\int_M|\d\varphi|^2_{g_\varphi}+|\d^*_{\varphi}\varphi|^2_{g_\varphi}\vol_{\varphi}\geq 0.
\end{equation}
Observe  that $\mathcal{D}$ and $\mathcal{C}$ differ by the total scalar curvature functional (see \cite{WW2}):
\begin{equation}
\mathcal{D}(\varphi)-\mathcal{C}(\varphi)=\int_MR(g_{\varphi})\vol_{\varphi}.
\end{equation}
\end{dfn}

The functionals $\mathcal{C}$ and $\mathcal{D}$ are both 
what one might call ``Dirichlet energies'', by analogy with \eqref{eq:Dirichlet}.  Therefore, one could potentially call the gradient flows of these quantities ``heat flows''.  There is also the possibility to generalise slightly and consider (in the notation of \eqref{eq:torsion}):
\begin{equation}\label{eq:Denergies}
\mathcal{D}_{\nu}(\varphi)=\sum_i \frac{\nu_i}{2}\int_M|\tau_i|^2_{g_{\varphi}}\vol_{\varphi}\geq 0
\end{equation}
for positive constants $\nu_i$: this encompasses $\mathcal{C}$  and $\mathcal{D}$ for appropriate choices of $\nu_i$.

All of these functionals are considered in \cite{WW1,WW2} where the authors show the following.

\begin{thm} The critical points of the Dirichlet energies $\mathcal{D}_{\nu}$ in \eqref{eq:Denergies} are the torsion-free $\GG_2$ structures, which are the absolute minimizers for the functionals (since they are precisely zero at these points).  
\end{thm}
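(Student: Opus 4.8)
The plan is to establish the two assertions separately: that the torsion-free structures are exactly the absolute minimizers, and that they are the only critical points. The first is immediate, and the second I would prove not by computing the full Euler--Lagrange equation but by exploiting a scaling symmetry of $\mathcal{D}_\nu$.

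For the minimizer statement: since every $\nu_i>0$ and each $|\tau_i|^2_{g_\varphi}\geq 0$ pointwise, we have $\mathcal{D}_\nu(\varphi)\geq 0$ with equality if and only if $\tau_0=\tau_1=\tau_2=\tau_3=0$. By the torsion equations \eqref{eq:torsion} this says precisely $\d\varphi=\d^*_\varphi\varphi=0$, i.e.\ $\varphi$ is torsion-free as in \eqref{eq:torsion.free}. Hence the zero set of $\mathcal{D}_\nu$ is exactly the torsion-free locus, these are absolute minimizers, and in particular they are critical points.

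For the converse -- that every critical point is torsion-free -- the key observation is that $\mathcal{D}_\nu$ is homogeneous of a fixed \emph{nonzero} degree under constant rescalings $\varphi\mapsto c\varphi$, $c>0$. Such a rescaling induces $g_\varphi\mapsto c^{2/3}g_\varphi$ and $\vol_\varphi\mapsto c^{7/3}\vol_\varphi$, while a short computation matching $\d(c\varphi)$ and $\d*_{c\varphi}(c\varphi)$ against the scaled torsion forms in \eqref{eq:torsion} yields the weights $\tau_0\mapsto c^{-1/3}\tau_0$, $\tau_1\mapsto\tau_1$, $\tau_2\mapsto c^{1/3}\tau_2$ and $\tau_3\mapsto c^{2/3}\tau_3$. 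Combining these with the rescaling of the metric on forms and of the volume, one checks that every summand carries the same degree,
\begin{equation}
\int_M |\tau_i|^2_{g_\varphi}\vol_\varphi\;\longmapsto\;c^{5/3}\int_M |\tau_i|^2_{g_\varphi}\vol_\varphi,
\end{equation}
and therefore $\mathcal{D}_\nu(c\varphi)=c^{5/3}\mathcal{D}_\nu(\varphi)$ for all $c>0$.

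With the homogeneity in hand the conclusion is Euler's identity. The constant rescaling defines an admissible variation $\dot\varphi=\varphi$ (positivity is preserved under scaling, so $c\varphi$ is a $\GG_2$ structure), so if $\varphi$ is a critical point then the first variation of $\mathcal{D}_\nu$ in this direction must vanish:
\begin{equation}
0=\frac{\d}{\d c}\Big|_{c=1}\mathcal{D}_\nu(c\varphi)=\frac{5}{3}\mathcal{D}_\nu(\varphi).
\end{equation}
Hence $\mathcal{D}_\nu(\varphi)=0$, and by the first part $\varphi$ is torsion-free. The main point to get right is the homogeneity computation: one must track the four distinct scaling weights of the torsion forms and verify that the metric and volume factors conspire so that all terms share the common degree $5/3$. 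It is essential that this degree is nonzero -- for a scale-invariant functional (degree $0$) the argument would say nothing, which is exactly the regime in which nontrivial critical points typically occur. A more laborious alternative would be to derive the Euler--Lagrange tensor directly from \eqref{eq:general.flow}, \eqref{eq:metric} and \eqref{eq:starphi} and show it forces the torsion to vanish; the scaling argument bypasses this entirely.
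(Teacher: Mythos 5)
Your proof is correct, but there is nothing in the paper to compare it against line by line: the paper states this theorem without proof, citing Weiss--Witt \cite{WW1,WW2}, where the classification of critical points is obtained alongside the computation of the full first variation (the Euler--Lagrange equations, which those authors need anyway in order to define and study the gradient flows). Your scaling argument is a legitimate and genuinely shorter route. The weights you assert do check out: from $g_{c\varphi}=c^{2/3}g_\varphi$, $\vol_{c\varphi}=c^{7/3}\vol_\varphi$ and $*_{c\varphi}(c\varphi)=c^{4/3}*_\varphi\varphi$, matching both equations in \eqref{eq:torsion} gives $\tau_0\mapsto c^{-1/3}\tau_0$, $\tau_1\mapsto\tau_1$, $\tau_2\mapsto c^{1/3}\tau_2$, $\tau_3\mapsto c^{2/3}\tau_3$; since the pointwise norm of a $k$-form scales by $c^{-2k/3}$, each density $|\tau_i|^2_{g_{c\varphi}}$ scales by exactly $c^{-2/3}$ and every summand of \eqref{eq:Denergies} by $c^{5/3}$, so Euler's identity annihilates any critical point with nonzero energy. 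The radial direction is admissible because positive $3$-forms form an open cone, so this is a complete argument. Two remarks. First, your homogeneity is implicitly present in the paper itself: the ``Scaling'' paragraph in the Questions subsection of the Dirichlet-energy section observes that rescaling $\varphi$ towards $0$ decreases the energy, which is precisely $\mathcal{D}_\nu(c\varphi)=c^{5/3}\mathcal{D}_\nu(\varphi)$ with positive exponent; your proof turns that heuristic remark into the critical-point classification. Second, what the scaling trick does not produce is the gradient itself --- the explicit flow equations and their parabolicity modulo diffeomorphisms --- so it complements rather than replaces the variational computation of \cite{WW1,WW2}; and, as you rightly note, the argument would collapse for a scale-invariant functional, which is why quoting the nonzero degree $5/3$ is the essential point.
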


\noindent They have also shown short-time existence of the gradient flows of the functionals $\mathcal{D}_{\nu}$, since they are parabolic (modulo diffeomorphisms) and so the standard DeTurck's trick approach can be used.

The key main results are the following (again in \cite{WW1,WW2}).
\begin{itemize}
\item Stability of the critical points.
\item Some simple examples of solitons.
\end{itemize}

In \cite{WW2} the following observation is made.

\begin{prop} The volume functional $\Vol(\varphi)$ in \eqref{eq:Vol} is monotone decreasing along the gradient flow of $\mathcal{C}$ in \eqref{eq:Dirichlet.functionals}.  In fact, it is a convex function along the flow.
\end{prop}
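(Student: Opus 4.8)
The plan is to exploit two facts: that the flow is the negative $L^2$-gradient flow of $\mathcal{C}$, and that $\mathcal{C}$ carries a definite homogeneity under the rescaling $\varphi\mapsto c\varphi$. First I would record the scaling behaviour of the ingredients of $\mathcal{C}$ in \eqref{eq:Dirichlet.functionals}. Since $g_\varphi\mapsto c^{2/3}g_\varphi$ and $\vol_\varphi\mapsto c^{7/3}\vol_\varphi$, while the Levi-Civita connection is unchanged by a constant rescaling of the metric so that $\nabla_\varphi\varphi\mapsto c\,\nabla_\varphi\varphi$ and hence $|\nabla_\varphi\varphi|^2_{g_\varphi}\mapsto c^{-2/3}|\nabla_\varphi\varphi|^2_{g_\varphi}$, one finds $\mathcal{C}(c\varphi)=c^{5/3}\mathcal{C}(\varphi)$. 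Differentiating at $c=1$ gives Euler's identity
\[
D\mathcal{C}(\varphi)[\varphi]=\tfrac{5}{3}\mathcal{C}(\varphi),
\]
and since the flow is $\tfrac{\partial}{\partial t}\varphi=-\grad\mathcal{C}$ this reads $D\mathcal{C}(\varphi)[\varphi]=\langle\grad\mathcal{C},\varphi\rangle_{L^2}=-\langle\tfrac{\partial}{\partial t}\varphi,\varphi\rangle_{L^2}$.

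Next I would combine this with \eqref{eq:Vol.flow}, which gives $\tfrac{\partial}{\partial t}\Vol(\varphi)=\tfrac13\langle\tfrac{\partial}{\partial t}\varphi,\varphi\rangle_{L^2}$. The two expressions together yield
\[
\frac{\partial}{\partial t}\Vol(\varphi)=-\tfrac{1}{3}\langle\grad\mathcal{C},\varphi\rangle_{L^2}=-\tfrac{5}{9}\mathcal{C}(\varphi)\leq 0,
\]
using $\mathcal{C}\geq 0$; this is the claimed monotonicity. For convexity I would differentiate once more in $t$. Since $\tfrac{\partial}{\partial t}\Vol=-\tfrac59\mathcal{C}(\varphi)$ holds for all $t$, and $\tfrac{\partial}{\partial t}\mathcal{C}(\varphi)=D\mathcal{C}(\varphi)[\tfrac{\partial}{\partial t}\varphi]=-\|\grad\mathcal{C}\|^2_{L^2}$ along the negative gradient flow, we obtain
\[
\frac{\partial^2}{\partial t^2}\Vol(\varphi)=-\tfrac{5}{9}\frac{\partial}{\partial t}\mathcal{C}(\varphi)=\tfrac{5}{9}\|\grad\mathcal{C}\|^2_{L^2}\geq 0,
\]
so $\Vol$ is convex along the flow.

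The step I expect to be the main obstacle is pinning down the homogeneity of $\mathcal{C}$ correctly, that is, tracking how $g_\varphi$, $\vol_\varphi$ and the pointwise norm $|\nabla_\varphi\varphi|^2$ transform under $\varphi\mapsto c\varphi$, which is what fixes the degree $5/3$ and hence Euler's identity. Everything after that is formal once the gradient-flow structure and \eqref{eq:Vol.flow} are in hand. One subtlety worth checking, but which does not affect the conclusion, is that short-time existence in practice uses a DeTurck/diffeomorphism modification of the gradient flow; however, both $\Vol$ and $\mathcal{C}$ are diffeomorphism-invariant, so any added Lie-derivative term contributes nothing to either $\tfrac{\partial}{\partial t}\Vol$ or $\tfrac{\partial}{\partial t}\mathcal{C}$, and the computation above is unchanged. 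I would also note that the precise value of the degree is immaterial to the qualitative statement: any positive homogeneity degree forces $\tfrac{\partial}{\partial t}\Vol\leq 0$, and convexity then follows purely from the gradient-flow structure regardless of the constant.
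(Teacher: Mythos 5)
Your argument is correct, but there is nothing in the paper to compare it against: the paper records this proposition as an observation imported from \cite{WW2} and supplies no proof, so your derivation has to stand on its own merits --- and it does. The mechanism (homogeneity of $\mathcal{C}$ plus the gradient-flow structure) works exactly as you say: $g_{c\varphi}=c^{2/3}g_\varphi$ gives $\mathcal{C}(c\varphi)=c^{5/3}\mathcal{C}(\varphi)$, Euler's identity gives $\langle\grad\mathcal{C}(\varphi),\varphi\rangle_{L^2}=\tfrac{5}{3}\mathcal{C}(\varphi)$, and combining with \eqref{eq:Vol.flow} along $\tfrac{\partial}{\partial t}\varphi=-\grad\mathcal{C}$ yields $\tfrac{\partial}{\partial t}\Vol(\varphi)=-\tfrac{5}{9}\mathcal{C}(\varphi)\leq 0$ and $\tfrac{\partial^2}{\partial t^2}\Vol(\varphi)=\tfrac{5}{9}\|\grad\mathcal{C}\|_{L^2}^2\geq 0$, a decreasing convex function as claimed. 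Two steps deserve to be written out rather than passed over with ``hence''. First, the scaling of the integrand: $\nabla_\varphi\varphi\mapsto c\,\nabla_\varphi\varphi$ alone would contribute $c^{2}$; the stated $c^{-2/3}$ also uses that $\nabla_\varphi\varphi$ has four covariant indices, so its pointwise squared norm carries four factors of the inverse metric, contributing $(c^{-2/3})^{4}=c^{-8/3}$, and $c^{2}\cdot c^{-8/3}=c^{-2/3}$; this is precisely the step you flagged as delicate, and your constant $\tfrac{5}{3}$ is right. Second, in the convexity computation the $L^2$ inner product defining $\grad\mathcal{C}$ depends on $g_{\varphi(t)}$ and hence on $t$; the identity $\tfrac{\partial}{\partial t}\mathcal{C}(\varphi(t))=-\|\grad\mathcal{C}\|_{L^2}^2$ is nevertheless exact, because the chain rule invokes only $D\mathcal{C}$ and the gradient taken with respect to the inner product at the current time, but on a space whose metric varies with the point this is a definitional remark worth making explicitly. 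Your closing observations are also correct: the statement presumes the \emph{negative} gradient flow (the parabolic direction, which is the flow actually constructed in \cite{WW1,WW2}); DeTurck-type Lie-derivative modifications change neither $\Vol$ nor $\mathcal{C}$ along the flow, since both are diffeomorphism invariant; and any positive homogeneity degree would give the same qualitative conclusion.
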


\noindent  This is contrary to our earlier results for the Laplacian flow and coflow which viewed $\Vol(\varphi)$ as having strict maxima (modulo diffeomorphisms).  In our view, this indicates a key drawback in this Dirichlet energy approach which we shall return to later.

\subsection{Spinorial flow}

Up to a constant multiplicative factor, the functional $\mathcal{C}$ can also be written in terms of unit spinors $\sigma$ on a 7-manifold as
\begin{equation}\label{eq:spinor.functional}
\mathcal{E}(\sigma)=\frac{1}{2}\int_M|\nabla_{g}\sigma|_{g}^2\vol_{\sigma}\geq  0.
\end{equation}
This formulation of the Dirichlet energy is something which can clearly be extended beyond $\GG_2$ geometry.

\begin{dfn}
Define the following functional on pairs of metrics and unit spinors on a compact oriented spin manifold $M$:
\begin{equation}\label{eq:spinor.functional.modified}
\mathcal{E}(g,\sigma)=\frac{1}{2}\int_M|\nabla_g\sigma|_g\vol_g\geq 0.
\end{equation}
The gradient flow of $\mathcal{E}$ is called the spinorial or spinor flow.
\end{dfn}

\noindent Here, unlike the $\GG_2$ case, one can vary the metric and spinor independently, with the caveat that the spinor must remain a $g$-spinor and be unit length.

In \cite{AWW} the authors show that the critical points of $\mathcal{E}(g,\sigma)$, when the dimension of the manifold $M$ is at least $3$, are given by parallel unit spinors $\sigma$ and so the metric $g$ is Ricci-flat of special holonomy.  They also show that the associated gradient flow (the spinorial flow) is parabolic modulo diffeomorphisms and so has short-time existence.  This time the analysis is more involved because the space of unit spinors varies as the metric varies, but despite these complications the final result is as one would expect.

\begin{itemize}
\item All of the ingredients enable one to prove that the critical points of
 \eqref{eq:spinor.functional.modified} are stable under its gradient flow, as shown in \cite{Lothar}. 
 \item The special case of the spinorial flow on Berger spheres is studied in detail in \cite{Wittmann}. 
 \item The full analysis of the 2-dimensional case, which has special features not covered in \cite{AWW}, is part of work in progress at the  time of writing.
\end{itemize}
 
\subsection{Questions}

\paragraph{Morse functionals.} The motivation for studying gradient flows of functionals is that one would hope (at least formally) that the functional is Morse (or Morse--Bott) on the space of geometric objects in question.  Therefore, the critical points would be encoded by the topology of the space of geometric objects.  Very often making this formal picture rigorous is very challenging, but still motivational.  For example, in the study of  surfaces in 3-manifolds (or, more generally, hypersurfaces in $n$-manifolds), since the topology of the space of such surfaces is infinite and the volume functional is (in some sense) formally a Morse function on this space, one might hope to construct infinitely many minimal surfaces in any 3-manifold: this is a conjecture due to Yau from 1982, which has been proved when the Ricci curvature of the 3-manifold is positive or generic \cite{IMNYau, MarquesNeves}, and recently claimed (at the time of writing) for all metrics on 3-manifolds in \cite{SongYau}.

Here, the Dirichlet functionals are defined on the space of $\GG_2$ structures (modulo diffeomorphism).  However, unlike the case of the volume functional on hypersurfaces, the only critical points of the functionals are absolute minimizers.  Therefore, if the functional is a Morse function then the best we can hope for is that the space of $\GG_2$ structures on our given manifold could be contractible onto the torsion-free $\GG_2$ structures.    

\paragraph{Scaling.} More than that, just as we saw with the volume functional in \eqref{eq:Vol}, the best way to reduce the Dirichlet energy is to send the $3$-form to zero by scaling, which is clearly useless.  The same thing of course happens when studying hypersurfaces under mean curvature flow, but we can stop the hypersurface from being contracted to a point by simply choosing a nontrivial homology class for our initial hypersurface, which then obviously cannot contain the ``zero'' hypersurface.  The same happens in the Laplacian flow and coflow: the cohomology class is fixed so that one kills the action of rescaling.  

Unfortunately, when studying the Dirichlet energies, the class of $\GG_2$ structure is not preserved and so all one can do is look at the homotopy class of the initial $\GG_2$ structure $\varphi$: this class is always homotopic to $0$ in the space of $\GG_2$ structures just by rescaling (although 0 is, of course, not a $\GG_2$ structure).  Therefore, one might expect for generic initial conditions that the Dirichlet energy gradient flows just send the 3-form to 0, which is certainly an absolute minimizer of the energy, but does not appear to provide any meaningful content.

This discussion leads to the following question: is there a way to modify or restrict the gradient flows of \eqref{eq:Dirichlet.functionals} to ensure that the 3-form does not go to 0?

\section{Conclusions}

The study of geometric flows of $\GG_2$ structures has seen some important progress, but it is fair to say that at the time of writing the subject is still in its relative infancy.  The flows we have described have both pros and cons, and seek to tackle different problems, so it is potentially interesting to further investigate all of them to see what we can learn about $\GG_2$ structures.  There are also potentially further flows of $\GG_2$ structures that could be useful, such as the flow of isometric $\GG_2$ structures introduced in \cite{Bagaglini}.  The field is clearly vibrant and wide open for discovery and progress.

In particular, it we would be very exciting if by studying geometric flows we can uncover a new criteria (geometric or topological) for the existence or otherwise of torsion-free $\GG_2$ structures.  Whilst this is an ambitious goal, by seeking to solve it we may well acquire a much better understanding of the space of $\GG_2$ structures.

\end{document}